\renewenvironment{proof}[1][Proof]{\textbf{#1.} }
{\ \rule{0.5em}{0.5em}}
\renewcommand{\arraystretch}{1.5}
\newtheorem{theorem}{Theorem}
\newtheorem{remark}{Remark}
\newtheorem{example}{Example}
\begin{document}

\title[Refined behavior  ... \dots]
{Refined behavior description of the normalized Ricci flow on homogeneous spaces}

\author{Nurlan A. Abiev}
\address{N.\,A.~Abiev \newline
Institute of Mathematics NAS of the Kyrgyz Republic, Bishkek, prospect Chui, 265a, 720071, Kyrgyz Republic}
\email{abievn@mail.ru}

\begin{abstract}
This article deals with the problems
of preserving the Ricci curvature positivity on homogeneous spaces under
the normalized Ricci flow (NRF).
We found out infinitely many generalized Wallach spaces (GWS) on which
the positivity of the Ricci curvature of metrics
is preserved when evolved by the NRF. Analogously, the number of  GWS is infinite as well, when the positivity of the Ricci curvature can be lost.
We also obtain some refinements to our previous results
devoted to the case of coincided parameters. 
A series of examples is discussed.

\vspace{2mm} \noindent Key words and phrases:
Homogeneous space,  generalized Wallach space,
Riemannian metric, normalized Ricci flow,  Ricci curvature.
\vspace{2mm}

\noindent {\it 2020 Mathematics Subject Classification:} 53C30,  53E20,  37C10
\end{abstract}

\maketitle

\section*{Introduction}

In~\cite{AANS1} we initiated the study of the normalized Ricci flow (NRF) equation
\begin{equation}\label{ricciflow}
\dfrac {\partial}{\partial t} \bold{g}(t) = -2 \operatorname{Ric}_{\bold{g}}+ 2{\bold{g}(t)}\frac{S_{\bold{g}}}{n}
\end{equation}
on  generalized Wallach spaces (GWS), where $\operatorname{Ric}_{\bold{g}}$ and $S_{\bold{g}}$  mean respectively the Ricci tensor and  the scalar curvature of a one-parameter family of Riemannian metrics~$\bold{g}(t)$.
A GWS is a~homogeneous space $G/H$ with a compact Lie group~$G$ and its closed Lie subgroup~$H$ such that $\mathfrak{p}$ in $\mathfrak{g}=\mathfrak{h}\oplus \mathfrak{p}$ admits a  decomposition
$\mathfrak{p}=\mathfrak{p}_1\oplus \mathfrak{p}_2\oplus \mathfrak{p}_3$
into a direct sum of three  irreducible modules
$\mathfrak{p}_1$, $\mathfrak{p}_2$ and $\mathfrak{p}_3$
satisfying $[\mathfrak{p}_i,\mathfrak{p}_i]\subset \mathfrak{h}$ for each $i\in\{1,2,3\}$, where~$\mathfrak{p}$ is an orthogonal complement to the Lie algebra~$\mathfrak{h}$ of $H$ in the Lie algebra~$\mathfrak{g}$ of $G$ with respect to the inner product $\langle\boldsymbol{\cdot}
\,,\boldsymbol{\cdot}\rangle=-B(\boldsymbol{\cdot}\,,\boldsymbol{\cdot})$
 defined on~$\mathfrak{g}$ by the Killing form
$B(\boldsymbol{\cdot}\,,\boldsymbol{\cdot})$ of~$\mathfrak{g}$
(see~\cite{Nikonorov4} and references therein for more details).
Note  that every GWS is described by a triple of real numbers $(a_1,a_2,a_3)\in (0,1/2]^3$. For our purposes within the framework of this article, this is quite sufficient.
The second useful feature of GWS is that any invariant Riemannian
metric~${\bf g}$ and its Ricci tensor $\operatorname{Ric}_{\bf g}$ admit the following decompositions  (see~\cite{Nikonorov4}):
\begin{eqnarray}\label{metric}
{\bf g}(\cdot, \cdot)&=&
\left.x_1\langle\cdot,\cdot\rangle\right|_{\mathfrak{p}_1}+
\left.x_2\langle\cdot,\cdot\rangle\right|_{\mathfrak{p}_2}+
\left.x_3\langle\cdot,\cdot\rangle\right|_{\mathfrak{p}_3},
\\ \label{tensor}
\operatorname{Ric}_{\bf g}(\cdot, \cdot)&=&
\left.x_1 {\bf r}_1\langle\cdot, \cdot\rangle\right|_{\mathfrak{p}_1}+
\left.x_2 {\bf r}_2\langle\cdot, \cdot\rangle\right|_{\mathfrak{p}_2}+
\left.x_3 {\bf r}_3\langle\cdot, \cdot\rangle\right|_{\mathfrak{p}_3},
\end{eqnarray}
where $x_1,x_2,x_3$ are positive real numbers and  ${\bf r}_i={\bf r}_i(x_1,x_2,x_3)\in \mathbb{R}$ are components of the Ricci tensor.
Expressions~\eqref{metric} and \eqref{tensor} allow  to split~\eqref{ricciflow} equivalently into the following system of nonlinear autonomous ordinary differential equations
on a given GWS with respect to $x_1, x_2$ and $x_3$:
\begin{equation}\label{three_equat}
\dot{x}_i= -\,2x_i \left(\bold{r}_i-\frac{a_1^{-1}\,\bold{r}_1+a_2^{-1}\,\bold{r}_2
+a_3^{-1}\,\bold{r}_3}
{a_1^{-1}+a_2^{-1}+a_3^{-1}}\right), \quad i=1,2,3.
\end{equation}
The following formulas are well-known for the components ${\bf r}_i$ of the Ricci tensor on a given GWS:
\begin{equation}\label{Ricci_princ}
{\bf r}_i = \dfrac{1}{2x_i}+\dfrac{a_i}{2}
\left(\dfrac{x_i}{x_j x_k}-\dfrac{x_j}{x_kx_i}-\dfrac{x_k}{x_ix_j} \right),
\end{equation}
where $(i,j,k)$ should be chosen equal to $(1,2,3)$, $(2,3,1)$ and $(3,1,2)$ (see~\cite{AANS1} for details).

One of the important questions is if the NRF preserves the positivity of the Ricci curvature of Riemannian metrics on a given manifold.
In general, the answer to this question turned out to be negative: the Ricci flow need not to preserve the positivity of  curvatures of metrics.
In 2004, Ni showed in~\cite{Ni2}  non-compact Riemannian manifolds with bounded non-negative sectional curvature on which the Ricci flow does not preserve the non-negativity of sectional curvature.
The positivity of curvatures may not be preserved on compact manifolds either,
as it became clear in 2007  by
B\"ohm and Wilking proved in~\cite[Theorem 3.1]{Bo} the following statement:
``On the compact manifold $\operatorname{Sp}(3)/\operatorname{Sp}(1)\times \operatorname{Sp}(1)\times \operatorname{Sp}(1)$  the Ricci flow evolves  \textit{certain}\/ positively curved metrics into metrics with mixed Ricci curvature.''

In~2015, Cheung and Wallach proved in~\cite[Theorem 2]{ChWal} that
on the spaces
$W_6:=\operatorname{SU}(3)/T_{\max}$,
$W_{12}:=\operatorname{Sp}(3)/\operatorname{Sp}(1)\times \operatorname{Sp}(1)\times \operatorname{Sp}(1)$ and
$W_{24}:=F_4/\operatorname{Spin}(8)$
the Ricci flow deforms \textit{certain}\/ positively curved metrics into metrics with mixed sectional curvature.
Metrics with positive sectional curvature may lose
the positivity of the Ricci curvature not only on $W_{12}$.
The results of~\cite[Theorem 3.1]{Bo} were covered
in~\cite[Theorem 9]{ChWal} by asserting such a property for the Wallach spaces $W_{12}$ and $W_{24}$: ``There exist homogeneous metrics of strictly positive sectional curvature on the 12 and 24 dimensional examples that deform under the Ricci flow to metrics with some negative Ricci curvature.''
However, the results of~\cite[Theorem 9]{ChWal} cannot be extended
to the  space~$W_{6}$. Some of them  may still lose positive sectional curvature on~$W_{6}$, however, all retain positive Ricci curvature as proved in~\cite[Theorem 8]{ChWal}: ``If~$g_0$ is a homogeneous Riemannian structure on a 6 dimensional example with strictly positive sectional curvature, then under the Ricci flow it retains strictly positive Ricci curvature.''

In 2016--2017, the mentioned results of~\cite{Bo, ChWal} were generalized
in~\cite{AN, Ab7} to a special class of GWS with coincided parameters $a_1=a_2=a_3=a\in (0,1/2)$ in which  $W_6, W_{12}$ and $W_{24}$ are contained as the members corresponding to the values $a=1/6$, $a=1/8$ and $a=1/9$.
Namely, for metrics~\eqref{metric} of the kind $x_1\ne x_2\ne x_3\ne x_1$ (called \textit{generic metrics}) we proved in~\cite[Theorem 1]{AN} that they \textit{all}\/  lose positivity of sectional curvature on the spaces $W_6, W_{12}$ and $W_{24}$; in~\cite[Theorem~4]{AN} a wider set of  metrics  was found on $W_6$ (the case $a=1/6$) preserving positive the Ricci curvature, even  losing the positivity of the sectional curvature; it was proved in~\cite[Theorem 3]{AN} and~\cite[Theorem 3]{Ab7}  that
\textit{all}\/  metrics~\eqref{metric} with positive Ricci curvature
are evolved into metrics with mixed Ricci curvature under the NRF for $a\in (0, 1/6)$, whereas the NRF evolves \textit{all}\/  metrics~\eqref{metric}  into metrics with
positive Ricci curvature in a finite time, if $a\in (1/6, 1/2)$. Note that some generalization of~\cite[Theorem~1]{AN} was obtained in~\cite{Ab_RM}.
Note also that metrics with $x_i=x_j$, $i\ne j$ (so called \textit{exceptional metrics}) admit a simple dynamics along invariant curves of the flow, and, therefore, are of no interest. Hence, in the sequel by ``a metric'' we mean ``a generic metric''.

Recently, in 2024,
some extended results on positive intermediate Ricci curvatures for an infinite series of homogeneous spaces
$$
\operatorname{Sp}(n+1)/\operatorname{Sp}(n-1)\times \operatorname{Sp}(1)\times \operatorname{Sp}(1),
$$
was obtained by Gonz\'alez-\'Alvaro and  Zarei in~\cite{Gonzales},
where the space $W_{12}$ is included as the first member corresponding to $n=2$.
By $d$-th intermediate Ricci curvature $\operatorname{Ric}_d$ they mean
$\operatorname{Ric}_d(v):=\sum_{i=1}^d\mathrm{K}(v,v_i)$,  $1\le d\le n-1$,
in addition, the  sectional curvature and the Ricci curvature
correspond to the values $d=1$ and $d=n-1$ respectively,
where $n$ is the dimension of the space.
A similar result was obtained by Cavenaghi et al. in~\cite{Caven} for a family of the homogeneous spaces
$$
\operatorname{SU}(m+2p)/\operatorname{S}(\operatorname{U}(m)\times \operatorname{U}(p)
\times \operatorname{U}(p)),
$$
where the space $W_{6}$ is contained at $m=p=1$.
Results of~\cite{Gonzales} and~\cite{Caven} related to the Ricci curvature,  were generalized in~\cite{Ab24} to  the spaces
\begin{eqnarray*}
&& \operatorname{SO}(k+l+m)/\operatorname{SO}(k)\times \operatorname{SO}(l)
\times \operatorname{SO}(m),\\
&& \operatorname{SU}(k+l+m)/\operatorname{S}(\operatorname{U}(k)\times \operatorname{U}(l) \times \operatorname{U}(m)), \\
&& \operatorname{Sp}(k+l+m)/\operatorname{Sp}(k)\times \operatorname{Sp}(l)
\times \operatorname{Sp}(m)
\end{eqnarray*}
with arbitrary  $k,l,m\in \mathbb{N}$.
In~\cite{Ab24} the other homogeneous spaces were also studied,  given in Table~1, according to the classification of Nikonorov~\cite{Nikonorov4}.
GWS~\textbf{n} in Table~1 means a generalized Wallach space~$G/H$ with corresponding Lie algebras $(\mathfrak{g}, \mathfrak{h})$ positioned in line~{\bf n}.  The parameter $\theta$ means the difference $(a_1+a_2+a_3)-1/2$.
Due to symmetry,  assume  $k\ge l\ge m\ge 1$ everywhere in the sequel.

\renewcommand{\arraystretch}{1.6}
\begin{center}
\hfill
\footnotesize{T\;a\;b\;l\;e\;\; 1}
\vskip 6pt
\textbf{The list of generalized Wallach spaces $G/H$ with  $G$ simple
according to~\cite{Nikonorov4}}
\vskip 6pt
\begin{tabular}{|c|c|c|c|c|c|c|}
\hline
GWS&$\mathfrak{g}$& $\mathfrak{h}$ & $a_1$ & $a_2$ & $a_3$ & $\theta$ \\\hline
${\bf 1}$&$\mathfrak{so}(k+l+m)$& $\mathfrak{so}(k)\oplus\mathfrak{so}(l)\oplus\mathfrak{so}(m)$          &$\frac{k}{2(k+l+m-2)}$  & $\frac{l}{2(k+l+m-2)}$&  $\frac{m}{2(k+l+m-2)}$&$\frac{1}{k+l+m-2}$\\
${\bf 2}$&$\mathfrak{su}(k+l+m)$& $\mathfrak{su}(k)\oplus\mathfrak{su}(l)\oplus\mathfrak{su}(m)$          & $\frac{k}{2(k+l+m)}$  & $\frac{l}{2(k+l+m)}$&  $\frac{m}{2(k+l+m)}$&$0$ \\
{\bf 3}&$\mathfrak{sp}(k+l+m)$& $\mathfrak{sp}(k)\oplus\mathfrak{sp}(l)\oplus\mathfrak{sp}(m)$          &$\frac{k}{2(k+l+m+1)}$&$\frac{l}{2(k+l+m+1)}$&  $\frac{m}{2(k+l+m+1)}$&$\frac{-1}{2(k+l+m+1)}$\\
{\bf 4}&$\mathfrak{su}(2l),~ l\ge 2$& $\mathfrak{u}(l)$
& $\frac{l+1}{4l}$  & $\frac{l-1}{4l}$&  $\frac{1}{4}$ &$1/4$  \\
{\bf 5}&$\mathfrak{so}(2l),~ l\ge 4$& $\mathfrak{u}(1)\oplus \mathfrak{u}(l-1)$
& $\frac{l-2}{4(l-1)}$  & $\frac{l-2}{4(l-1)}$&  $\frac{1}{2(l-1)}$&$0$   \\
{\bf 6}&$\mathfrak{e}_6$& $\mathfrak{su}(4)\oplus 2\mathfrak{sp}(1)\oplus \mathbb{R}$
& $1/4$  & $1/4$&  $1/6$  &$1/6$ \\
{\bf 7}&$\mathfrak{e}_6$& $\mathfrak{so}(8)\oplus  \mathbb{R}^2$
& $1/6$  & $1/6$&  $1/6$  &$0$  \\
{\bf 8}&$\mathfrak{e}_6$& $\mathfrak{sp}(3)\oplus  \mathfrak{sp}(1)$
& $1/4$  & $1/8$&  $7/24$  &$1/6$ \\
{\bf 9}&$\mathfrak{e}_7$& $\mathfrak{so}(8)\oplus  3\mathfrak{sp}(1)$
& $2/9$  & $2/9$&  $2/9$ &$1/6$  \\
{\bf 10}&$\mathfrak{e}_7$& $\mathfrak{su}(6)\oplus \mathfrak{sp}(1)\oplus \mathbb{R}$
& $2/9$  & $1/6$&  $5/18$  &$1/6$ \\
{\bf 11}&$\mathfrak{e}_7$& $\mathfrak{s0}(8)$
& $5/18$  & $5/18$&  $5/18$  &$1/3$ \\
{\bf 12}&$\mathfrak{e}_8$& $\mathfrak{so}(12)\oplus  2\mathfrak{sp}(1)$
& $1/5$  & $1/5$&  $4/15$  &$1/6$ \\
{\bf 13}&$\mathfrak{e}_8$& $\mathfrak{so}(8)\oplus  \mathfrak{so}(8)$
& $4/15$  & $4/15$&  $4/15$  &$3/10$ \\
{\bf 14}&$\mathfrak{f}_4$& $\mathfrak{so}(5)\oplus  2\mathfrak{sp}(1)$
& $5/18$  & $5/18$&  $1/9$ &$1/6$  \\
{\bf 15}&$\mathfrak{f}_4$& $\mathfrak{so}(8)$
& $1/9$  & $1/9$&  $1/9$  &$-1/6$ \\\hline
\end{tabular}
\end{center}

\medskip

In~\cite{Ab24} the results of~\cite[Theorems~2--4]{AN} and~\cite[Theorem~3]{Ab7} were also extended to the general case  $a_1, a_2, a_3\in (0,1/2)$.
Briefly, the questions were  clarified in~\cite{Ab7, AN, Ab24}: on which spaces can the positivity of the Ricci curvature be preserved or lost
and does such a property hold for all metrics or only for a part of them. Another interesting question arises about the number of such spaces.

\smallskip

Let $\textbf{x}=(x_1,x_2,x_3)$,
$\textbf{a}=(a_1,a_2,a_3)$ and  $X^3=X\times X\times X$ for any set $X$.
Introduce  the cones~$\Lambda_i$ defined by ${\bf r}_i=0$ and the set
$$
\mathscr{R}_{+}=\big\{\textbf{x}\in (0,+\infty)^3 ~~ \big|~~ {\bf r}_1>0, ~  {\bf r}_2>0, ~  {\bf r}_3>0\big\},
$$
where ${\bf r}_1, {\bf r}_2$ and ${\bf r}_3$ are given in~\eqref{Ricci_princ}.
Then $\operatorname{Ric}_{{\bf g}(t)}>0$ (or briefly $\operatorname{Ric}>0$) for~\eqref{ricciflow}
is equivalent to $\textbf{x}(t)\in \mathscr{R}_{+}$ for~\eqref{three_equat} according to~\eqref{tensor}.
Respectively, in the sequel any sentence containing the phrase ``the positivity of the Ricci curvature is preserved (under the NRF)'' will mean $\operatorname{Ric}_{{\bf g}(t)}>0$   (equivalently $\textbf{x}(t)\in \mathscr{R}_{+}$) for all $t>0$ and \textit{any} original metric ${\bf g}(0)$ with $\operatorname{Ric}_{{\bf g}(0)}>0$  (equivalently  $\textbf{x}(0)\in \mathscr{R}_{+}$); in such cases we will also use the even shorter phrase ``\textit{all} metrics with $\operatorname{Ric}>0$ retain $\operatorname{Ric}>0$''.
Analogously, ``the positivity of the Ricci curvature can not be preserved'' will mean existing of \textit{some}  metric with $\operatorname{Ric}_{{\bf g}(0)}>0$ ($\textbf{x}(0)\in \mathscr{R}_{+}$) and existing of a finite time $t_0>0$ such that
$\operatorname{Ric}_{{\bf g}(t)}>0$ fails ($\textbf{x}(t)\notin \mathscr{R}_{+}$) for all $t\ge t_0$; a shorter phrase is ``\textit{some} metrics with $\operatorname{Ric}>0$ lose $\operatorname{Ric}>0$''.
Let's use ``the positivity of the Ricci curvature is lost'' in the strictest sense meaning that for \textit{every}  metric with $\operatorname{Ric}_{{\bf g}(0)}>0$ ($\textbf{x}(0)\in \mathscr{R}_{+}$) there exists a finite time $t_0>0$ such that
$\operatorname{Ric}_{{\bf g}(t)}>0$ fails ($\textbf{x}(t)\notin \mathscr{R}_{+}$) for all $t\ge t_0$; a shorter phrase is ``\textit{all} metrics with $\operatorname{Ric}>0$ lose $\operatorname{Ric}>0$''.
We need also the invariant surface $\Sigma$ of the system~\eqref{three_equat} defined by the equation $x_1^{1/a_1}x_2^{1/a_2}x_3^{1/a_3}=1$.

\smallskip

 The main results of this article are described in
 Theorems~\ref{thm0_KarSU_25}\,--\,\ref{thm2_KarSU_25}.

\begin{theorem}\label{thm0_KarSU_25}
There are infinitely many generalized Wallach spaces
such that every metric~\eqref{metric} on every of them loses
the positivity of the Ricci curvature when evolved by the normalized Ricci flow~\eqref{ricciflow}.
\end{theorem}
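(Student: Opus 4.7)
The strategy is to combine the general criterion of~\cite{Ab24} for the ``all metrics with $\operatorname{Ric}>0$ lose $\operatorname{Ric}>0$'' regime with an infinite sub-family from Table~1 whose parameter triples $(a_1,a_2,a_3)$ lie in the corresponding region of $(0,1/2)^3$. In the coincided case $a_1=a_2=a_3=a$ this regime is $a\in(0,1/6)$, equivalently $\theta<0$; in the general case~\cite{Ab24} extends this to a region still characterised, in particular, by $\theta<0$ together with additional sign conditions on the $a_i$.

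First I would isolate from~\cite{Ab24} the exact system of inequalities on $(a_1,a_2,a_3)$ that guarantees every trajectory of~\eqref{three_equat} starting in $\mathscr{R}_+$ leaves $\mathscr{R}_+$ in finite time and never returns. Next, I would exhibit an infinite family in Table~1 whose parameters verify these inequalities. The natural candidate is line~\textbf{3}, the symplectic family
\[
\operatorname{Sp}(k+l+m)/\operatorname{Sp}(k)\times\operatorname{Sp}(l)\times\operatorname{Sp}(m),\qquad (k,l,m)\in\mathbb{N}^3,
\]
for which $\theta=-\dfrac{1}{2(k+l+m+1)}<0$ and $a_1+a_2+a_3<\dfrac{1}{2}$ unconditionally, producing infinitely many pairwise non-isomorphic GWS. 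It then remains to substitute $a_i=\frac{(\text{respective integer})}{2(k+l+m+1)}$ into the criterion from~\cite{Ab24} and verify it uniformly in $k\ge l\ge m\ge 1$, which should reduce to an essentially algebraic check; if needed, one may supplement line~\textbf{3} by subranges of lines~\textbf{1}, \textbf{2} where the sign configuration of the $a_i$ still triggers the criterion, but line~\textbf{3} alone already delivers the conclusion.

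The main obstacle is extracting and invoking the \emph{strong} form of the criterion from~\cite{Ab24}: one must establish that \emph{every} orbit in $\mathscr{R}_+$ exits, not merely that some orbit does. This rests on a phase-plane analysis on the invariant surface $\Sigma=\{x_1^{1/a_1}x_2^{1/a_2}x_3^{1/a_3}=1\}$, showing that no equilibrium of the induced planar flow lies in $\overline{\mathscr{R}_+}\cap \Sigma$, so that every orbit eventually crosses one of the cones $\Lambda_i$ and enters the region where some ${\bf r}_i\le 0$. Once this phase-plane input is in place, the uniform verification on the symplectic family amounts to routine bookkeeping in the integer parameters $(k,l,m)$, and Theorem~\ref{thm0_KarSU_25} follows.
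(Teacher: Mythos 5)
Your choice of the symplectic family (line \textbf{3} of Table~1) is the same one the paper uses, and the computation $\theta=-\frac{1}{2(k+l+m+1)}<0$ for all $(k,l,m)$ is correct. The gap is in the engine you want to drive the argument with: you assume that \cite{Ab24} contains a criterion guaranteeing that \emph{every} metric with $\operatorname{Ric}>0$ loses $\operatorname{Ric}>0$ on a general GWS with $a_1+a_2+a_3<1/2$ (possibly under extra sign conditions). No such criterion exists. Theorem~6 of \cite{Ab24} only yields that the positivity of the Ricci curvature \emph{can not be preserved}, i.e.\ that \emph{some} metric loses $\operatorname{Ric}>0$; whether this upgrades to \emph{all} metrics for general $a_1+a_2+a_3\le 1/2$ is posed in this paper's Conclusion as an open conjecture. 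Consequently the step ``substitute $a_i=\frac{\cdot}{2(k+l+m+1)}$ into the criterion and verify it uniformly in $k\ge l\ge m\ge 1$'' has nothing to substitute into, and the phase-plane argument you sketch (no equilibria of the induced flow in $\overline{\mathscr{R}_{+}}\cap\Sigma$, hence every orbit exits) is precisely the missing mathematics rather than routine bookkeeping: it is not supplied by the cited literature and is not known to hold in the non-coincided case.

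The repair is the remark you make in passing and then abandon: restrict to the diagonal subfamily $k=l=m$, i.e.\ $\operatorname{Sp}(3k)/\operatorname{Sp}(k)\times\operatorname{Sp}(k)\times\operatorname{Sp}(k)$, for which $a_1=a_2=a_3=a=\frac{k}{6k+2}<\frac{1}{6}$ for every $k\in\mathbb{N}$. In the coincided-parameter case the strong statement is actually a theorem: by Theorem~3 of \cite{AN}, for $a\in(0,1/6)$ \emph{every} metric~\eqref{metric} with $\operatorname{Ric}>0$ loses $\operatorname{Ric}>0$ under the flow~\eqref{three_equat}. This already produces a countably infinite family of pairwise distinct GWS with the required property, which is all Theorem~\ref{thm0_KarSU_25} demands; the full three-parameter symplectic family is unnecessary and, with the results currently available, can only be asserted to carry \emph{some} metrics that lose $\operatorname{Ric}>0$.
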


\begin{theorem}\label{thm1_KarSU_25}
There are infinitely many generalized Wallach spaces
such that every metric~\eqref{metric} on every of them preserves
the positivity of the Ricci curvature when evolved by the normalized Ricci flow~\eqref{ricciflow}.
\end{theorem}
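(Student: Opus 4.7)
The plan is to exhibit an explicit infinite subfamily of the generalized Wallach spaces of Table~\textbf{1} whose parameter triples $(a_1,a_2,a_3)$ lie in a regime where preservation of the positivity of the Ricci curvature under the NRF has already been established. The most direct choice is the one-parameter subfamily of line~\textbf{1} obtained by setting $k=l=m=n$, namely
$$
G_n/H_n:=\mathrm{SO}(3n)/\mathrm{SO}(n)\times\mathrm{SO}(n)\times\mathrm{SO}(n),\qquad n\ge 2.
$$
Substituting $k=l=m=n$ into the formulas of Table~\textbf{1} yields the coincident parameters
$$
a_1=a_2=a_3=\frac{n}{6n-4},
$$
which the elementary inequalities $n/(6n-4)>1/6$ and $n/(6n-4)\le 1/4$ (valid for every $n\ge 2$) place strictly inside the interval $(1/6,\,1/2)$.

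Next, I would invoke the coincident-parameter preservation result of \cite[Theorem~3]{AN} and \cite[Theorem~3]{Ab7} (with the extension in \cite{Ab24}): for any GWS with $a_1=a_2=a_3=a\in (1/6,\,1/2)$, the NRF drives every metric \eqref{metric} into a metric with $\operatorname{Ric}>0$ in finite time and, once the trajectory $\mathbf{x}(t)$ enters $\mathscr{R}_+\cap\Sigma$, it can no longer leave it. The second clause is precisely forward-invariance of $\mathscr{R}_+$ under \eqref{three_equat}, so every metric with $\operatorname{Ric}_{{\bf g}(0)}>0$ retains $\operatorname{Ric}_{{\bf g}(t)}>0$ for all $t>0$, as required.

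Since the dimensions of the spaces $G_n/H_n$ grow unboundedly with $n$, the members of this family are pairwise non-isomorphic, yielding infinitely many GWS with the claimed preservation property and completing the proof. Alternative infinite subfamilies can be produced in exactly the same manner, e.g.\ line~\textbf{1} with $k=l=n$ and $m$ fixed, or lines~\textbf{4} and~\textbf{5} for suitable~$l$, combined with the non-coincident-parameter criteria of \cite{Ab24}; this flexibility suggests that Theorem~\ref{thm1_KarSU_25} is very far from sharp.

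The only non-cosmetic obstacle is making sure that the invoked theorem really gives forward-invariance of $\mathscr{R}_+\cap\Sigma$, as opposed to only the weaker statement that trajectories eventually enter $\mathscr{R}_+$; this is where one must inspect the phase portrait of \eqref{three_equat} on $\Sigma$, comparing the sign of $\dot{\mathbf r}_i$ along the curves $\Lambda_i\cap\Sigma$ with the direction of the vector field, in order to rule out any transient excursion through the boundary $\partial\mathscr{R}_+$. Once this point is settled, the rest of the argument reduces to the parameter bookkeeping carried out above.
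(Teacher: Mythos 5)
Your choice of family is the one that does \emph{not} work, and the paper itself points this out. For $\mathrm{SO}(3n)/\mathrm{SO}(n)\times\mathrm{SO}(n)\times\mathrm{SO}(n)$ you get $a=n/(6n-4)$, which decreases monotonically from $1/4$ (at $n=2$) to the limit $1/6$. By Theorem~\ref{thm2_KarSU_25} of this paper, \emph{all} metrics preserve $\operatorname{Ric}>0$ only when $a\ge a^{\ast}\approx 0.1739$, which for this family forces $n\le 16$ (see Example~\ref{ex3}); for $n\ge 17$ one has $a\in(1/6,a^{\ast})$ and \emph{some} metrics with $\operatorname{Ric}>0$ do leave $\mathscr{R}_{+}$ before eventually re-entering it. The Remark following the paper's proof states exactly this, citing \cite[Theorem~8]{Ab24}: only finitely many of the spaces in line~\textbf{1} have the full preservation property. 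So your family yields finitely many examples, not infinitely many.

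The underlying error is the reading of \cite[Theorem~3]{AN} and \cite[Theorem~3]{Ab7}: for $a\in(1/6,1/2)$ these give only that every metric is evolved into one with $\operatorname{Ric}>0$ \emph{in finite time} (an eventual-positivity statement), not forward-invariance of $\mathscr{R}_{+}$. You flag this gap yourself in your final paragraph, but it cannot be closed for your family, because forward-invariance genuinely fails for $n\ge 17$ (the ``enter -- leave -- enter'' scenario). The fix is to pick a family whose $\theta=a_1+a_2+a_3-1/2$ stays bounded away from the thresholds: the paper first shows $0<\theta_i(a_i)<1/6$ for all $a_i\in(0,1/2)$, where $\theta_i=a_i-\tfrac12+\tfrac12\sqrt{(1-2a_i)/(1+2a_i)}$, and then uses $\mathrm{SU}(2l)/\mathrm{U}(l)$, $l\ge 2$ (line~\textbf{4}), for which $\theta=1/4>1/6>\theta_i$ uniformly in $l$, so that \cite[Theorem~7]{Ab24} applies to every member of the family. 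Your ``alternative subfamilies'' sentence mentions line~\textbf{4} in passing, but the argument as written rests on the $\mathrm{SO}$ family and therefore does not prove the theorem.
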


\begin{theorem}\label{thm2_KarSU_25}
Assume that the normalized Ricci flow~\eqref{ricciflow} evolves metrics~\eqref{metric}
on generalized Wallach spaces with  coincided parame\-ters $a_1=a_2=a_3:=a \in (0, 1/2)$.
Then
\begin{enumerate}
\item [1)]
All metrics lose $\operatorname{Ric}>0$,~ if  $a\in (0,1/6)$;
\item [2)]
Some metrics preserve $\operatorname{Ric}>0$,~ if $a\in [1/6, a^{\ast})$;
\item [3)]
All metrics preserve $\operatorname{Ric}>0$,~ if  $a\in [a^{\ast}, 1/2)$,
\end{enumerate}
where  $a^{\ast}$ is some number from the interval $(1/6, 1/4)$.
\end{theorem}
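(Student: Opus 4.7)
My strategy is to exploit the scale invariance of~\eqref{three_equat} when $a_1=a_2=a_3=a$. Since $\tfrac{d}{dt}(x_1x_2x_3)=-2(x_1x_2x_3)\sum_i(\mathbf{r}_i-R)=0$ for $R=(\mathbf{r}_1+\mathbf{r}_2+\mathbf{r}_3)/3$, every trajectory is captured up to homothety by the invariant surface $\Sigma=\{x_1x_2x_3=1\}$, and $\mathscr{R}_+$ is stable under the scaling $x\mapsto\lambda x$; so it suffices to study the flow on $\Sigma$, where the Ricci components of~\eqref{Ricci_princ} simplify to
\[
\mathbf{r}_i=\frac{1}{2x_i}+\frac{a}{2}\bigl(x_i^2-x_j^2-x_k^2\bigr),
\]
with $(i,j,k)$ a cyclic permutation of $(1,2,3)$, and the planar vector field is $\dot x_i=-2x_i(\mathbf{r}_i-R)$. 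Part~(1) of the theorem is then an immediate restatement of \cite[Theorem~3]{AN} and~\cite[Theorem~3]{Ab7}, which assert precisely that for $a\in(0,1/6)$ every metric with $\operatorname{Ric}>0$ exits $\mathscr{R}_+$ under the NRF.

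For Parts~(2) and~(3), I would identify $a^{\ast}$ through the \emph{forward invariance} of $\mathscr{R}_+\cap\Sigma$. At a boundary point on $\Lambda_i\cap\Sigma$ with $\mathbf{r}_i=0$ and $\mathbf{r}_j,\mathbf{r}_k\ge 0$, the flow satisfies
\[
\frac{d\mathbf{r}_i}{dt}=\sum_{\ell=1}^{3}\frac{\partial \mathbf{r}_i}{\partial x_\ell}\,\dot x_\ell,
\]
which, after substituting~\eqref{three_equat} and the constraints $\mathbf{r}_i=0$, $x_1x_2x_3=1$, is a rational function of a single parameter along the curve $\Lambda_i\cap\Sigma$, with numerator a polynomial $P_a(\xi)$ depending on~$a$. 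Setting
\[
a^{\ast}:=\inf\bigl\{a\in(1/6,1/2):\,P_a(\xi)\ge0\text{ for all admissible }\xi\bigr\},
\]
one obtains $a^{\ast}\in(1/6,1/4)$ by exhibiting a point on $\Lambda_i\cap\Sigma$ at which $P_a(\xi)<0$ for some $a$ just above $1/6$, and by verifying $P_{1/4}(\xi)\ge 0$ identically. The most convenient test points are the intersections of $\Lambda_i$ with the symmetry axes $\{x_j=x_k\}$; although these correspond to excluded exceptional metrics, they control the sign of $P_a$ on $\Lambda_i\cap\Sigma$ by continuity.

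With $a^{\ast}$ in hand, Part~(3) follows at once: forward invariance of the closed set $\mathscr{R}_+\cap\Sigma$ for $a\ge a^{\ast}$ traps every trajectory starting inside it, so every metric preserves $\operatorname{Ric}>0$. Part~(2) decomposes into two sub-claims. That \emph{some} metric preserves $\operatorname{Ric}>0$ follows, for $a\in(1/6,a^{\ast})$, from the fact that the Einstein metric $(1,1,1)$ is an asymptotically stable equilibrium of the planar flow on $\Sigma$; a small sublevel set of a Lyapunov function around $(1,1,1)$ lies entirely in $\mathscr{R}_+\cap\Sigma$ and is therefore forward invariant by construction; at the endpoint $a=1/6$ the required preserving metric is furnished directly by \cite[Theorem~4]{AN}. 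That \emph{not} all metrics preserve $\operatorname{Ric}>0$ follows because $P_a(\xi)<0$ at some $\xi$ yields a boundary point through which the flow exits, so a trajectory started just inside that boundary point leaves $\mathscr{R}_+$. The main obstacle lies in the algebraic control of $P_a$: the endpoints of $(1/6,1/4)$ can be verified by hand at the symmetric test points mentioned above, but proving that $P_a(\xi)\ge 0$ holds uniformly in $\xi$ \emph{precisely} when $a\ge a^{\ast}$ amounts to a monotonicity-in-$a$ statement that seems to require either an implicit-function argument at the critical pair $(\xi,a)$ where $P_a$ first touches zero from above, or a discriminant-type elimination of~$\xi$ from $P_a(\xi)=P_a'(\xi)=0$. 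Carrying this out cleanly is the delicate core of the proof.
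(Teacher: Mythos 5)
Your reduction to the surface $\Sigma=\{x_1x_2x_3=1\}$ and the simplified formula for $\mathbf{r}_i$ there are correct, and Part~1) is indeed just a citation of \cite[Theorem~3]{AN} and \cite[Theorem~3]{Ab7}, exactly as in the paper. But your argument for Part~2) contains a genuine error: the normal Einstein metric $(1,1,1)$ is \emph{not} asymptotically stable on $\Sigma$ for $a\in[1/6,a^{\ast})$. Linearizing the reduced flow at $(1,1,1)$ (using $u_1+u_2+u_3=0$ on $\Sigma$ to first order) gives $\dot u_i=(1-4a)u_i$, so $(1,1,1)$ is a repeller for every $a<1/4$, and $[1/6,a^{\ast})\subset(0,1/4)$ since $a^{\ast}\approx 0.174$. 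No Lyapunov sublevel set around $(1,1,1)$ can therefore supply the preserving metrics; the actual mechanism (see the paper's Conclusion and \cite[Theorem~7]{Ab24}) is global: trajectories escape to infinity on $\Sigma$ along directions in which $\mathscr{R}_{+}$ is eventually absorbing, possibly after an ``enter--leave--enter'' excursion. This sub-claim of Part~2) is exactly the one your proposal fails to establish.

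The second problem is that the part you yourself flag as ``the delicate core'' --- proving that the boundary polynomial $P_a(\xi)$ is nonnegative for all admissible $\xi$ precisely when $a\ge a^{\ast}$, and locating $a^{\ast}$ in $(1/6,1/4)$ --- is not carried out, and it is where all the content of Parts~2) and~3) lives. The paper does not redo this invariance analysis from scratch: it imports the criterion $\theta\ge\theta_i$ of \cite[Theorem~7]{Ab24}, rewrites it as $4(a_j+a_k)^2\ge(1-2a_i)(1+2a_i)^{-1}$, and observes that for $a_1=a_2=a_3=a$ this is the single cubic inequality $32a^3+16a^2+2a-1\ge0$. The polynomial $h(a)=32a^3+16a^2+2a-1$ has negative discriminant, hence a unique real root $a^{\ast}$, and $h(1/6)=-2/27<0<1=h(1/4)$ places $a^{\ast}$ in $(1/6,1/4)$; Part~3) then follows from \cite[Theorem~7]{Ab24}, and Part~2) from the weaker conclusion of that theorem on $(1/6,a^{\ast})$ together with \cite[Theorem~4]{AN} at $a=1/6$. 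Your test points on the symmetry axes $\{x_j=x_k\}$ would also not suffice as stated: controlling the sign of $P_a$ on all of $\Lambda_i\cap\Sigma$ ``by continuity'' from two points is not an argument, and the critical configuration determining $a^{\ast}$ need not be symmetric. To make your route rigorous you would essentially have to reprove \cite[Theorem~7]{Ab24}, including the elimination of $\xi$ that you defer.
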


The exact value of  $a^{\ast}$ is
$$
a^{\ast}=(\mu-1)^2(12\mu)^{-1}
$$
with $\mu=\sqrt[3]{28+3\sqrt{87}}$.
Approximately, $a^{\ast}\approx 0.1739051924$ (see Remark~\ref{exact_a}).

\section{The proofs of the main results}

\begin{proof}[Proof of Theorem~\ref{thm0_KarSU_25}]
According to Theorem~6 in~\cite{Ab24}
the positivity of the Ricci curvature of metrics~\eqref{metric} can not be preserved
on a GWS with $a_1+a_2+a_3< 1/2$, when evolving by the NRF~\eqref{ricciflow}.
An infinite number of homogeneous spaces
$\operatorname{Sp}(k+l+m)/\operatorname{Sp}(k)\times \operatorname{Sp}(l)
\times \operatorname{Sp}(m)$ with $k,l,m\in \mathbb{N}$  satisfies that theorem, since
$$
a_1+a_2+a_3-\frac{1}{2}=-\frac{1}{2(k+l+m+1)}<0
$$
due to
$a_1=\frac{k}{2(k+l+m+1)}$,
$a_2=\frac{l}{2(k+l+m+1)}$ and
$a_3=\frac{m}{2(k+l+m+1)}$
(see line~\textbf{3} in Table~1 in~\cite{Nikonorov4}).
For $k=l=m$ we have an infinite family
$\operatorname{Sp}(k)/\operatorname{Sp}(k)\times \operatorname{Sp}(k)
\times \operatorname{Sp}(k)$ with $a_1=a_2=a_3=a= \frac{k}{6k+2}<\frac{1}{6}$.
According to~\cite[Theorem~3]{AN} every metric loses the positivity of the Ricci
curvature on every such space.
\end{proof}

\bigskip

\begin{proof}[Proof of Theorem~\ref{thm1_KarSU_25}]
Let us introduce the following sets of parameters $(a_1,a_2,a_3)\in (0,1/2)^3$:
$$
\mathscr{A}=\big\{a_1+a_2+a_3>1/2\,
\mbox{~and the trajectories of~\eqref{three_equat}~do not leave}~\mathscr{R}_{+} \big\},
$$
$$
\mathscr{B}=\big\{(a_1,a_2,a_3)\in \mathscr{A} \mbox{~corresponds to a certain GWS}\big\}.
$$

Let us first prove that the set \textit{$\mathscr{A}$ is infinite}.
As follows from Theorem~~\cite[Theorem~7]{Ab24}
$(a_1,a_2,a_3)\in\mathscr{A}$ is ensured if
$\theta \ge \max\left\{\theta_1, \theta_2, \theta_3\right\}$,
where $\theta =a_1+a_2+a_3-1/2$.
We first establish the estimate
\begin{equation}\label{ineqTeta}
0<\theta_i(a_i)<1/6
\end{equation}
for each  function
$$
\theta_i=\theta_i(a_i)=a_i-\dfrac{1}{2}+\dfrac{1}{2}\sqrt{\dfrac{1-2a_i}{1+2a_i}}
$$
defined for $a_i\in (0,1/2)$, where
$i=1,2,3$.
Indeed  $\theta_i(0)=\theta_i(1/2)=0$ and for all $a_i\in (0,1/2)$ and every $i=1,2,3$, see Figure~\ref{improv71}.
Indeed~\eqref{ineqTeta} is equivalent to\,
$$
3\sqrt{(1-2a)(1+2a)^{-1}}<2(2-3a),
$$
where $a:=a_i\in (0,1/2)$.
Since $1-2a>0$ and $2-3a>0$ for  $a\in (0,1/2)$,
then  raising the latter inequality to square
and simplifying, we obtain a new inequality
$$
-72a^3+60a^2-2a-7<0,
$$
equivalent to~\eqref{ineqTeta}.
Introduce the cubic  polynomial
$p(a)=-72a^3+60a^2-2a-7$.
Negativity of its determinant  implies
that $p(a)$ admits one real root (and two complex roots).
Since $p(-\infty)>0$
\big(where $p(-\infty):=\lim_{a\to -\infty}p(a)=+\infty$\big)
and $p(0)<0$, that  real root must be negative.
In what follows that  $p(a)<0$ for all $a\in (0,+\infty)$, consequently,
for all $a\in (0,1/2)$. This justifies the inequality~\eqref{ineqTeta}.

The estimate~\eqref{ineqTeta} provides an opportunity to make~$\theta >\theta_i$ for each~$i=1,2,3$ by choosing infinitely many values of the parameters~$a_i$ from the interval $(2/9, 1/2)$. Indeed
$$
\theta=a_1+a_2+a_3-\frac{1}{2}> 3\cdot \frac{2}{9}-\frac{1}{2}= \frac{2}{3}-\frac{1}{2}=\frac{1}{6}> \theta_i
$$
and, hence, the conditions $a_1, a_2, a_3\in (2/9, 1/2)$ are sufficient to ensure that no trajectory of the differential system~\eqref{three_equat}  leaves the domain~$\mathscr{R}_{+}$, starting in~$\mathscr{R}_{+}$.
Thus, $\mathscr{A}$ can be chosen equal to the infinite (uncountable) set $(2/9, 1/2)^3$.

\smallskip

\textit{$\mathscr{B}$ is an infinite set}.
It is well known that every GWS is described by some triple
$(a_1, a_2, a_3)\in (0,1/2)^3$, but not every  triple
can correspond to a certain GWS (see~\cite{Nikonorov4}).
Therefore, we need to prove the existence of  infinitely many triples $(a_1, a_2, a_3)\in \mathscr{A}$ that correspond to actual GWSs.
A suitable example for this aim is an infinite family of  homogeneous spaces $\operatorname{SU}(2l)/\operatorname{U}(l)$, $l\ge 2$
(see line~\textbf{4} in~\cite[Table~1]{Nikonorov4})
with
$$
a_1=\frac{l+1}{4l}, \quad a_2=\frac{l-1}{4l}, \quad a_3=\frac{1}{4}
$$
and
$$
\theta=a_1+a_2+a_3-1/2=1/4>1/6>\theta_i,  \qquad i=1,2,3.
$$
Therefore, for $\mathscr{B}$ we can choose the countable set
$$
\mathscr{B}:=\big\{\big(l+1)/4, (l-1)/4, 1/4\big)~ |~ l\ge 2\big\} \subset \mathscr{A}.
 $$
 This means that there is an
 infinite number of GWSs on which any metric~\eqref{metric} with
 $\operatorname{Ric}>0$ maintains  $\operatorname{Ric}>0$
 when evolved by the NRF~\eqref{ricciflow}.
Theorem~\ref{thm1_KarSU_25} is proved.
\end{proof}

\begin{remark}
Unfortunately, the homogeneous spaces  $\operatorname{SO}(k+l+m)/\operatorname{SO}(k)\times \operatorname{SO}(l)
\times \operatorname{SO}(m)$  (see row~\textbf{1} in~Table~2),
can  not satisfy~Theorem~\ref{thm1_KarSU_25}.
According to~\cite[Theorem~8]{Ab24}
there is only a finite number of such spaces
on which the NRF~\eqref{ricciflow} preserves $\operatorname{Ric}>0$
for every initial metric~\eqref{metric} with  $\operatorname{Ric}>0$.
The extra condition $l\ge 2$ was supposed
to except $l=m=1$  which yields $(a_1,a_2,a_3)\notin (0,1/2)^3$
due to $a_1=1/2$. For this reason the case $l=m=1$  was studied in~\cite[Theorem~4]{Ab242} separately, where it was proved  that the NRF preserves the property $\operatorname{Ric}>0$ on  every Stiefel manifold
$\operatorname{SO}(n)/\operatorname{SO}(n-2)$ with $n\ge 3$.
\end{remark}

\begin{remark} The estimate $\theta_i(a)<1/6$ can be improved by the sharpest upper bound of the function~$\theta_i(a)$. Indeed
$$
\frac{d \theta_i}{da}=1-\frac{\sqrt{v}}{(1+2a)^2},
$$
where $v=(1+2a)(1-2a)^{-1}>0$ for all $a\in (0,1/2)$.
Then
$$
\dfrac{d \theta_i}{da}\ge 0   \Leftrightarrow
v\le (1+2a)^4
\Leftrightarrow  4a^3+4a^2-1\le 0.
$$
The only real root $a_0\in (0.4, 0.5)$
of the cubic polynomial $4a^3+4a^2-1$ equals approximately
$0.4196$. Then
$$
\theta_i(a)\le\theta_i(a_0)=\max_{a\in [0,1/2]} \theta_i(a)\approx 0.0674,
$$
see Figure~\ref{improv71}.
\end{remark}

\begin{figure}[h]
\centering
\includegraphics[width=0.5\linewidth]{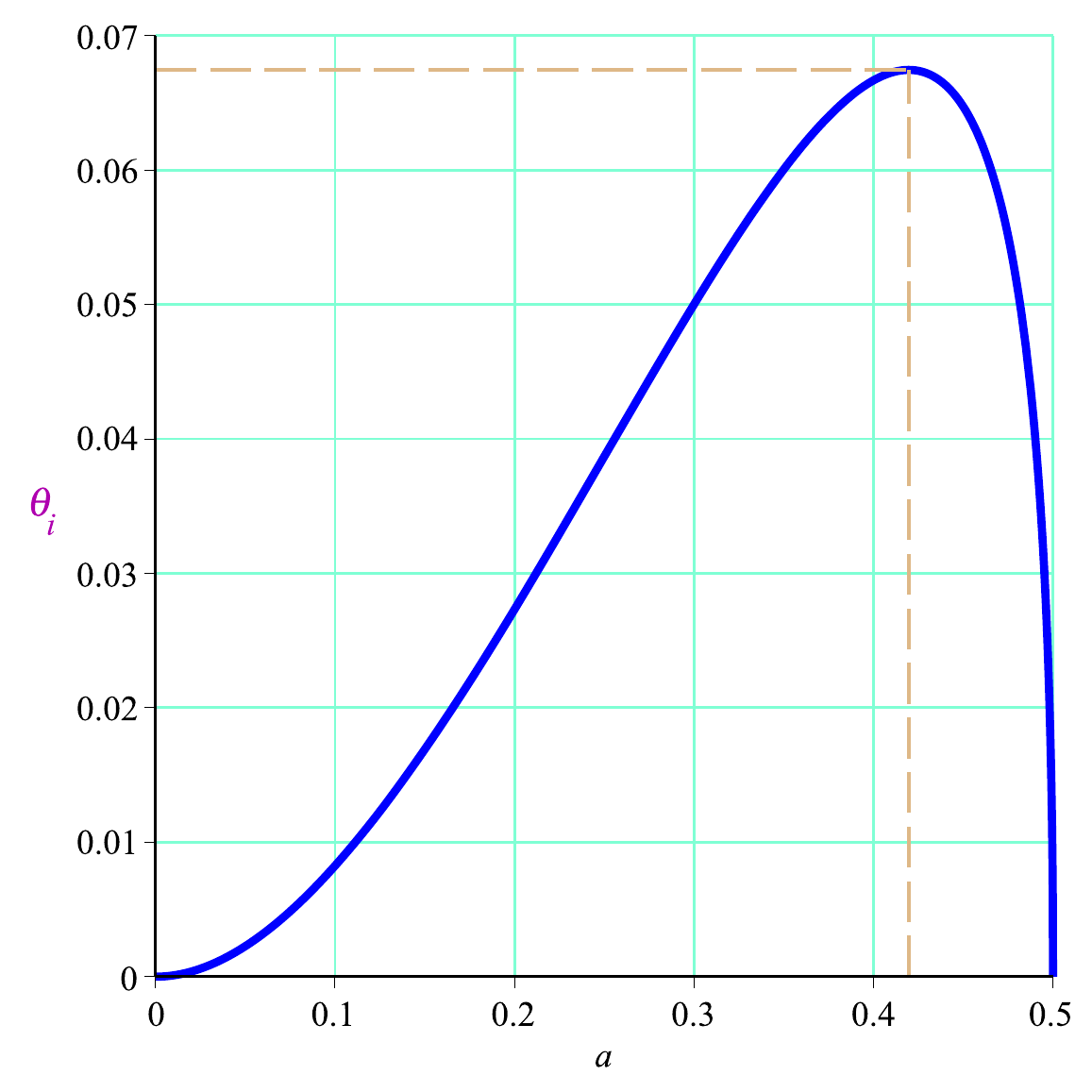}
\caption{The function $\theta_i(a)$}
\label{improv71}
\end{figure}

\begin{proof}[Proof of Theorem~\ref{thm2_KarSU_25}]
Since $1-2a_i>0$ for $a_i\in (0,1/2)$
the condition  $\theta\ge \theta_i$ in~\cite[Theorem~7]{Ab24} is equivalent to
$$
4(a_j+a_k)^2\ge (1-2a_i)(1+2a_i)^{-1},
$$
where $\theta=a_i+a_j+a_k-1/2$ since
$(i,j,k)$ is any permutation of $(1,2,3)$.
For  $a_1=a_2=a_3=a$ this equivalent to
$$
32a^3+16a^2+2a-1\ge 0.
$$
The cubic polynomial $h(a):=32a^3+16a^2+2a-1$ has a real root $a=a^{\ast}$ between
$a=1/6$ and $a=1/4$, because of $h(1/6)=-2/27<0$ and  $h(1/4)=1>0$.
That root is unique (excepting two complex roots)
by  negativity  of the discriminant.
Hence
$$
\theta\ge \theta_1=\theta_2=\theta_3
$$
for all
$a\in [a^{\ast}, 1/2)$.
This means that
every initial metric with $\operatorname{Ric}>0$
retain $\operatorname{Ric}>0$, if $a\in [a^{\ast}, 1/2)$.
As for the case $a\in (0, a^{\ast})$,
then
$$
\theta< \theta_1=\theta_2=\theta_3
$$
and~\cite[Theorem~7]{Ab24} stated
that at least some metrics can maintain ${\operatorname{Ric}>0}$.
We can give a more detailed analysis to the interval~$(0, a^{\ast})$ now.
Indeed according to~\cite[Theorem~3]{AN} any  metric with $\operatorname{Ric}>0$ loses $\operatorname{Ric}>0$ on any GWS with $a\in (0, 1/6)$.
Then due to the inequality $a^{\ast}>1/6$ shown above,
the less strict conclusion of~\cite[Theorem~7]{Ab24}
continues to hold for the values
$a\in (1/6, a^{\ast})$. Theorem~\ref{thm2_KarSU_25} is proved.
\end{proof}

\begin{remark}\label{exact_a}
The exact value of~$a^{\ast}$ can be found by Cardano's formula.
Indeed replacing $a=b-1/6$ we reduce $32a^3+16a^2+2a-1=0$ to a
cubic equation $b^3+pb+q=0$ in the canonical form, where  $p=-1/48$ and $q=-7/216$,
that admits the unique real root
$\widetilde{b}=\alpha+\beta$, where
$$
\alpha=\sqrt[3]{-q/2+\sqrt{Q}}, \qquad
\beta=\sqrt[3]{-q/2-\sqrt{Q}}
$$
and  $Q=(p/3)^3+(q/2)^2$.
Since $\alpha\beta=-p/3$ it follows then
$$
a^{\ast}=\widetilde{b}-\frac{1}{6}=\alpha+\frac{1}{144\alpha}-\frac{1}{6}=
\frac{(\mu-1)^2}{12\mu}
$$
with $\mu=12\alpha$.
\end{remark}

\section{Examples}

\begin{figure}[h]
\centering
\includegraphics[width=0.5\linewidth]{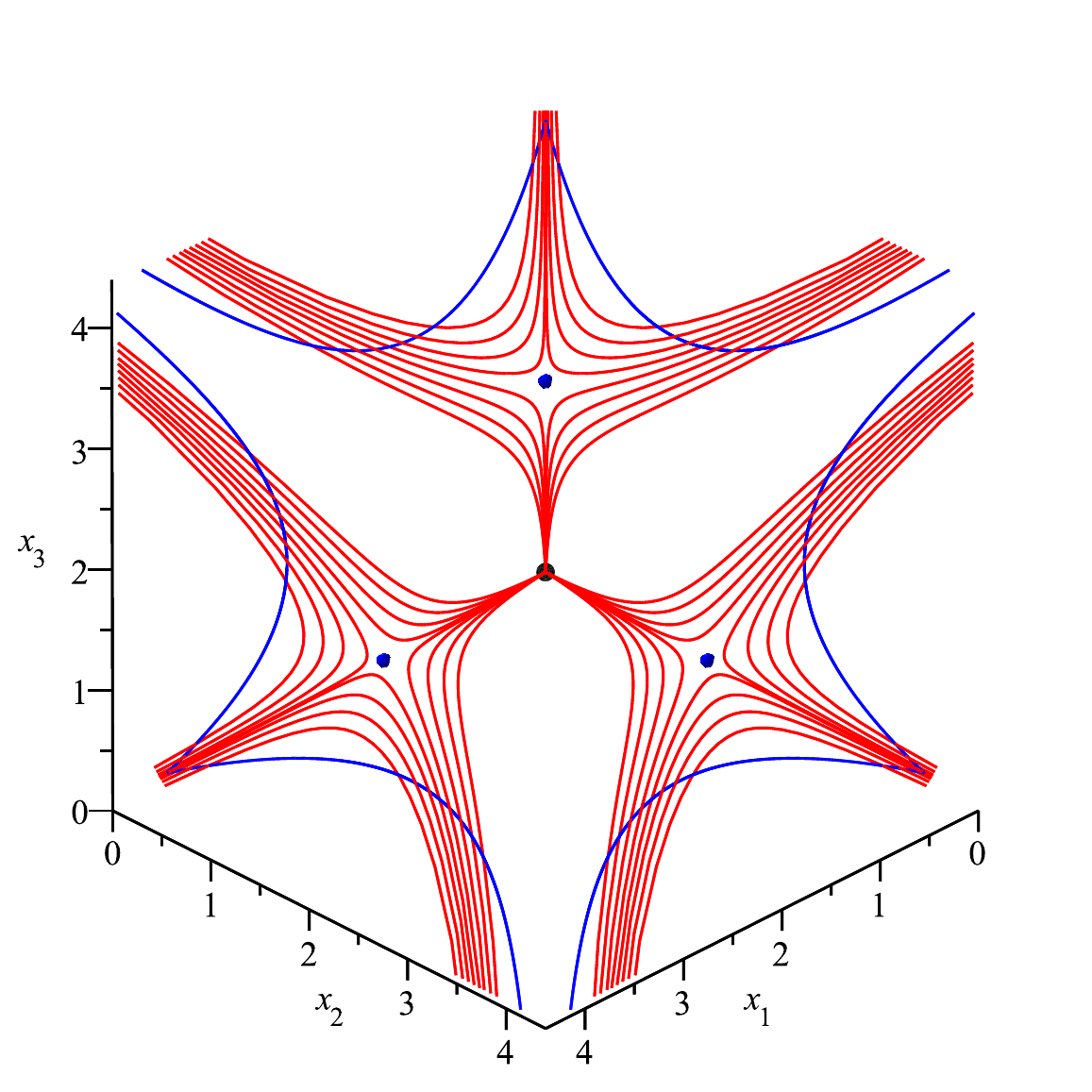}
\caption{$F_4/\operatorname{Spin}(8)$: trajectories leave~$\Sigma\cap \mathscr{R}_{+}$}
\label{improv72}
\end{figure}

\begin{figure}[h]
\centering
\includegraphics[width=0.5\linewidth]{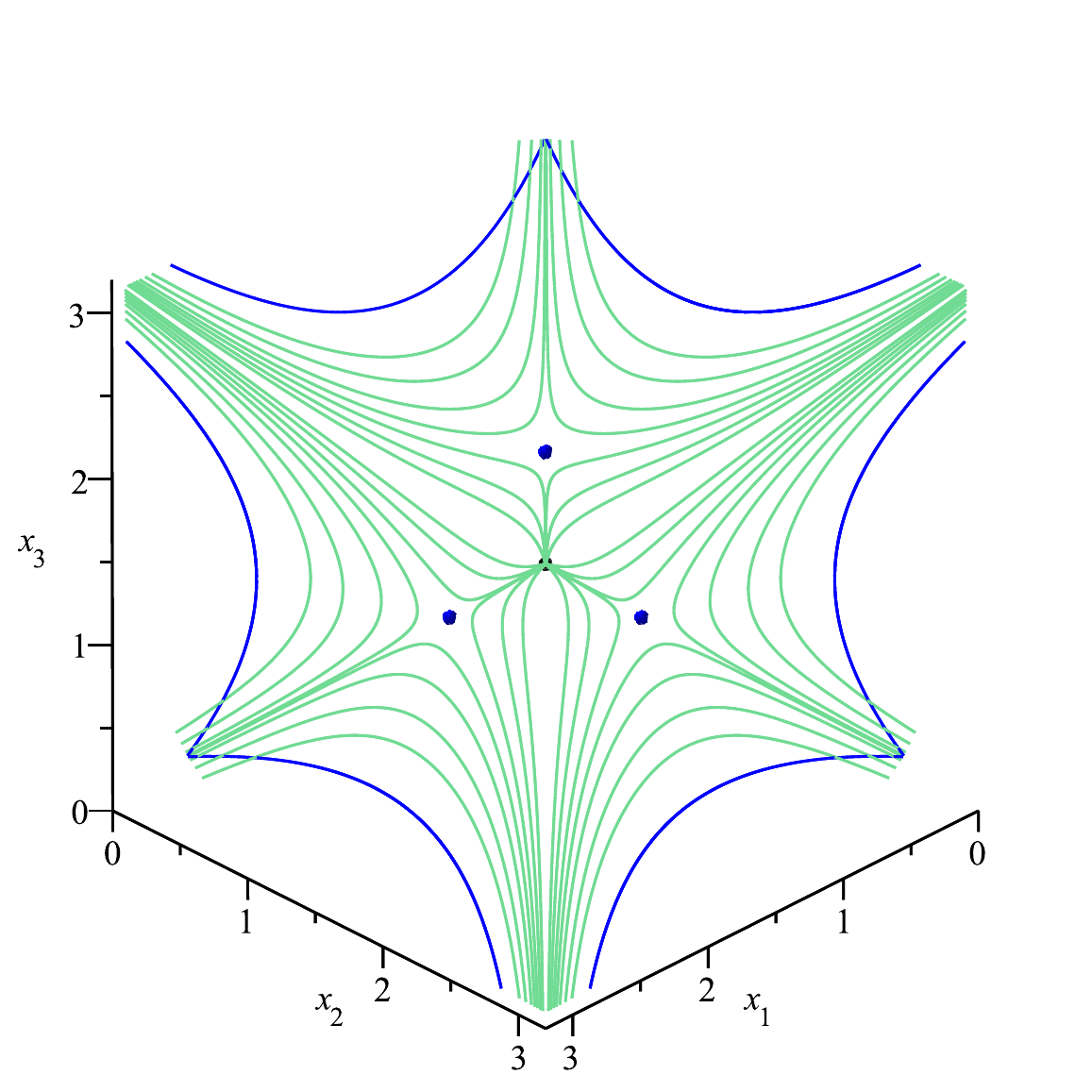}
\caption{The NRF~\eqref{ricciflow} on GWS~\textbf{1} with $k=l=m=16$ ($a=4/23$):
trajectories stay in~$\Sigma\cap \mathscr{R}_{+}$}
\label{improv73}
\end{figure}

\begin{example}
\textit{The case $a\in (0,1/6)$}.
Any metric~\eqref{metric}  with $\operatorname{Ric}>0$ loses  $\operatorname{Ric}>0$ on every space\,
$\operatorname{Sp}(3k)/\operatorname{Sp}(k)\times \operatorname{Sp}(k)
\times \operatorname{Sp}(k)$ (GWS~\textbf{3} in Table~1),
when evolved by the NRF~\eqref{ricciflow}.
\end{example}

Note that
$a_1=a_2=a_3=\dfrac{k}{6k+2}<1/6$,
satisfying Theorem~\ref{thm2_KarSU_25} for all\, $k\in \mathbb{N}$.

\begin{example}
The  Wallach space ${W_{24}=F_4/\operatorname{Spin}(8)}$ (GWS~\textbf{15})
is another example of a homogeneous space on which
every metric with $\operatorname{Ric}>0$ loses  $\operatorname{Ric}>0$ under the NRF~\eqref{ricciflow}.
\end{example}

Indeed
$a_1=a_2=a_3=a=1/9$ for~$W_{24}$ and, hence, $a<1/6$, satisfying Theorem~\ref{thm2_KarSU_25}.
Trajectories (in red) of the system~\eqref{three_equat} correspon\-ding to~$W_{24}$
are depicted in Figure~\ref{improv72} that leave the domain $\Sigma\cap \mathscr{R}_{+}$  and never return back there.

\begin{example}\label{ex3}
\textit{The case $a\in \big[a^{\ast}, 1/2\big)$}.
Every metric~\eqref{metric}  with $\operatorname{Ric}>0$ maintains  $\operatorname{Ric}>0$ on every  space\,
$\operatorname{SO}(3k)/\operatorname{SO}(k)\times \operatorname{SO}(k)
\times \operatorname{SO}(k)$
with $k\in \{2,\dots, 16\}$, when evolved by the NRF~\eqref{ricciflow}.
 \end{example}

Indeed  $1/6< a \le 1/4$\, for all\, $k\ge 2$ at $a=a_1=a_2=a_3=k/(6k-4)$.
Admitting solutions
$$
k\le \dfrac{4a^{\ast}}{6a^{\ast}-1}\approx 16.0166,
$$
the inequality $a\ge a^{\ast}$ can be satisfied only for $k\in \{2,\dots, 16\}$
with $a^{\ast}\approx 0.1739051924$.
The case~$k=16$ is illustrated in Figure~\ref{improv73}, where trajectories (in aquamarine) all remain in the domain~$\Sigma\cap \mathscr{R}_{+}$ due to
$$
a=\frac{k}{6k-4}=\frac{4}{23}>a^{\ast},
$$
where  $4/23\approx 0.1739130435$.
Note that  $k\le 16$  is consistent with~\cite[Theorem~7]{Ab24}.

\begin{example}\textit{The case $a\in \big[a^{\ast}, 1/2\big)$}.
$\operatorname{Ric}>0$ is preserved on the spaces
$E_7/\operatorname{SO}(8)\times \operatorname{SU}(2)\times \operatorname{SU}(2)
\times \operatorname{SU}(2)$,
$E_7/\operatorname{SO}(8)$
and
$E_8/\operatorname{SO}(8)\times \operatorname{SO}(8)$
located in lines \textbf{9, 11} and~\textbf{13} in Table~1.
 \end{example}

The following values of $a$ correspond to them respectively:
$$
a=2/9\approx 0.22>a^{\ast}, \quad
a=5/18\approx 0.27 >a^{\ast}, \quad a=4/15\approx 0.26>a^{\ast}.
$$

\begin{figure}[h]
\centering
\includegraphics[width=0.5\linewidth]{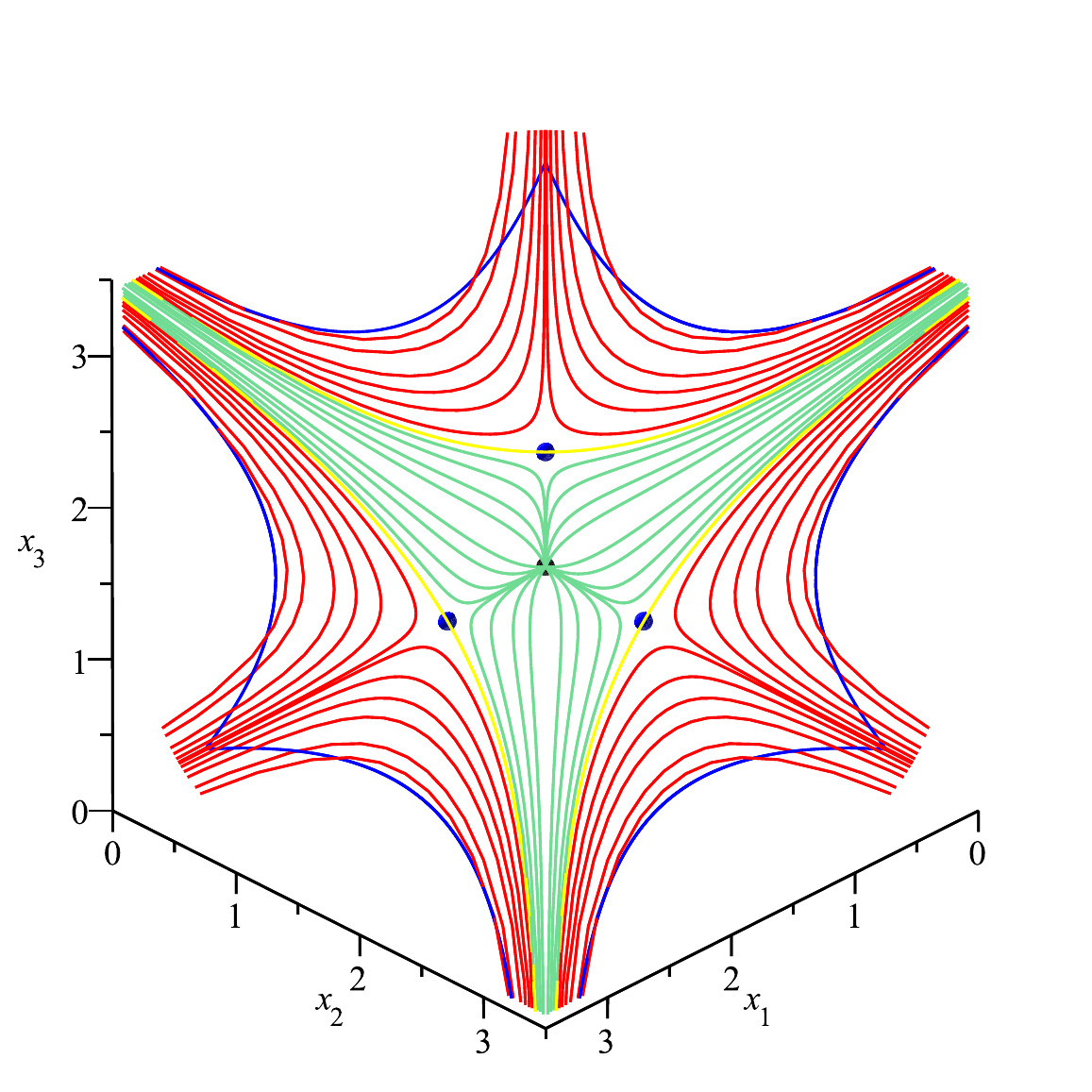}
\caption{The NRF~\eqref{ricciflow} on GWS with $a=1/6$:
there are trajectories both leaving and staying in~$\Sigma\cap \mathscr{R}_{+}$}
\label{improv74}
\end{figure}

\begin{figure}[h]
\centering
\includegraphics[width=0.5\linewidth]{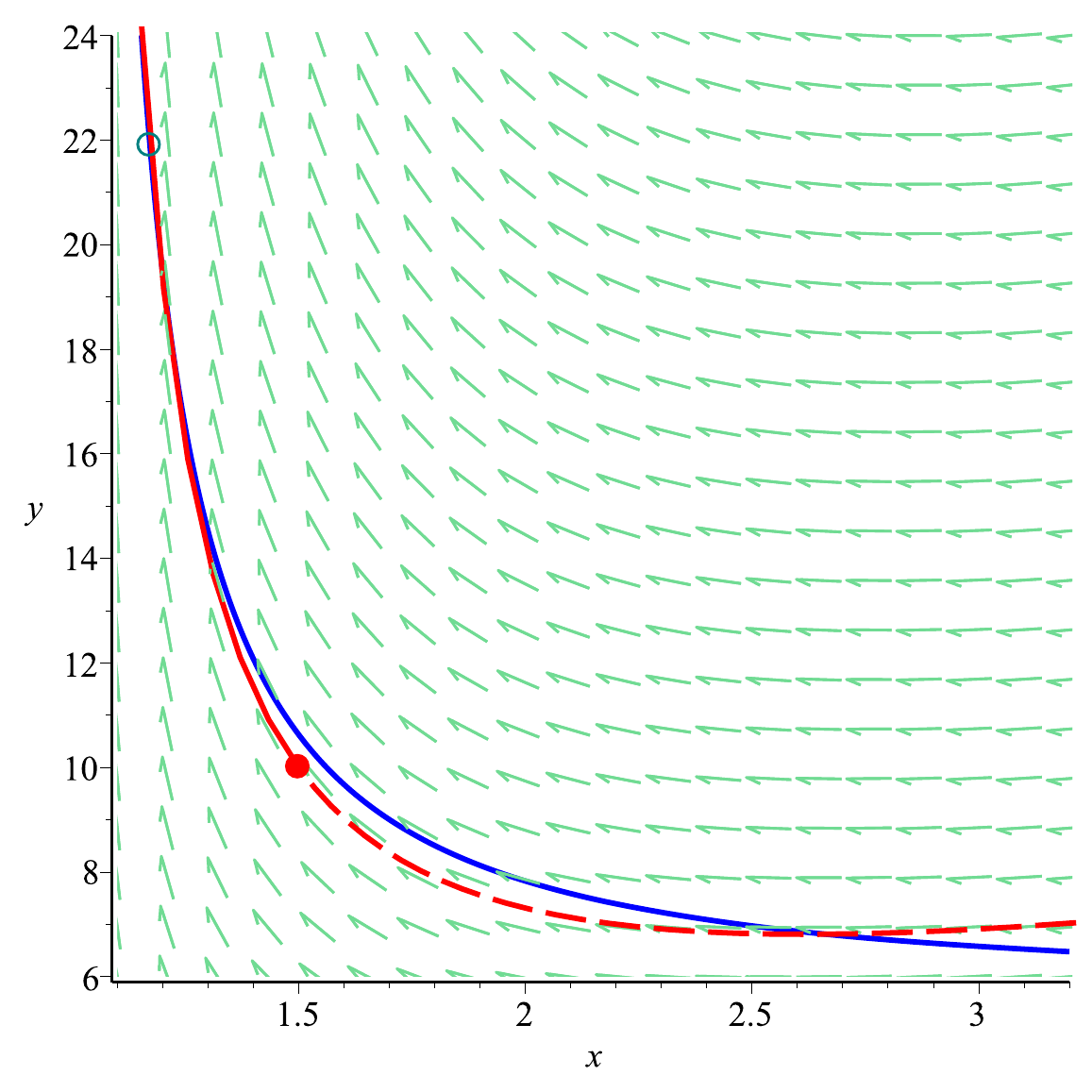}
\caption{The border curve $\gamma$ and the
approximated integral curve with the initial point $(x_0,y_0)=(1.5, 10)$ in the  case $a=1/6$:  part I}
\label{improv742}
\end{figure}

\begin{figure}[h]
\centering
\includegraphics[width=0.5\linewidth]{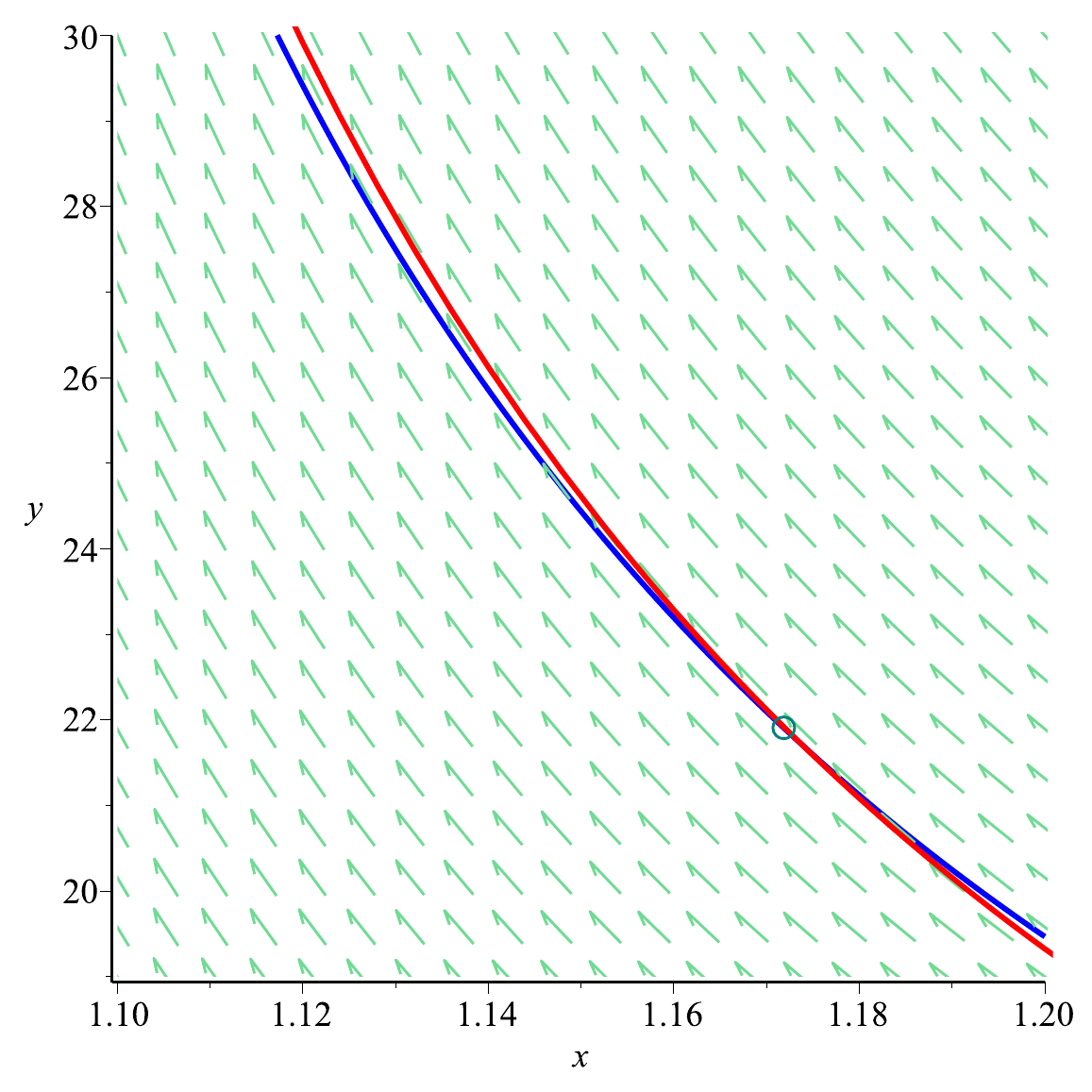}
\caption{The border curve $\gamma$ and the
approximated integral curve with the initial point $(x_0,y_0)=(1.5, 10)$ in the  case $a=1/6$:   part II}
\label{improv743}
\end{figure}

\begin{example}\textit{The case $a=1/6$}.
There  are   metrics both preserving $\operatorname{Ric}>0$ and losing $\operatorname{Ric}>0$ on every GWS with $a=1/6$. In Figure~\ref{improv74}  trajectories are illustrated
both leaving~$\Sigma\cap \mathscr{R}_{+}$ and retaining in~$\Sigma\cap \mathscr{R}_{+}$.
 \end{example}

Note that infinitely many GWS correspond to $a=1/6$, for instance,
so are $\operatorname{SU}(3k)/\operatorname{S}(\operatorname{U}(k)\times \operatorname{U}(k) \times \operatorname{U}(k))$.
According to~\cite[Theorem~4]{AN} those metrics \textit{retain $\operatorname{Ric}>0$}
 chosen from the domain bounded by the  curves
$$
\{x_k=x_i+x_j\} \cap \{x_ix_jx_k=1\},
$$
depicted in Figure~\ref{improv74} in yellow.
Metrics~\eqref{metric} with   $x_k=x_i+x_j$ are known as K\"ahler metrics on GWS.
Theorem~6 in~\cite{Ab24} implies also the existence of metrics~\eqref{metric}
losing $\operatorname{Ric}>0$.

To construct more visible illustrations of metrics  \textit{losing $\operatorname{Ric}>0$} it is  convenient to pass to new variables
$x=x_3/x_1$ and $y=x_3/x_1$ satisfying $y>x>1$ (see details in~\cite{AN, Ab24}). The border curve, for example,
$\zeta_1=\Sigma \cap\Lambda_1$ will be transformed to the curve
$$
y=\psi(x)= \frac{x+\sqrt{(1-4a^2)x^2+4a^2}}{2a(x^2-1)}\,x, \qquad  1<x<1/a.
$$
Let's denote it by  $\gamma$ (the curve in blue in Figures~\ref{improv742}
and~\ref{improv743}).

An arbitrary initial point  can be chosen from the  image $R$ of $\Sigma\cap \mathscr{R}_{+}$ under the homeomorphism $(x_1,x_2,x_3)\mapsto (x,y)$, so that the pre-image of that point belongs to~$\Sigma\cap \mathscr{R}_{+}$. For example,
$$
(x_0,y_0)=(1.5, 10)\in R
$$
satisfies such a condition, see Figure~\ref{improv742}.
Now the unique solution $y=\phi(x)$ of the Cauchy problem
$$
\frac{dy}{dx}=\frac{(y-1)(y-2axy-2ax)}{(x-1)(x-2axy-2ay)}, \qquad y(x_0)=y_0
$$
for $a=1/6$ (see~\cite{AN}),  can be approximately found in order to show that the trajectory
initiated in $(x_0, y_0)$ leaves the domain~$R$.
For this goal choose  the left border $b=1.1$ and $N=100$ mesh points
constructing a grid with the nodes $x_i=x_0+ih$,   $i=0,\dots, N$,
and the constant negative step size $h=(b-x_0)/N=-0.004$.
Using the fourth order Runge-Kutta method we observe
that the first positive difference
$$
[\phi]_i-\psi(x_i)>0
$$
happens at the mesh point $x_i=1.172$ with $i=82$, where $[\phi]_i$ is the approximative value of $\phi(x_i)$ evaluated by the finite difference scheme. The corresponding value of $\psi(x_i)$ is $21.8927$.
In Figures~\ref{improv742}
and~\ref{improv743} the Runge-Kutta approximation (in red) to the unique solution  of the Cauchy problem with $(x_0,y_0)=(1.5, 10)$ (the  solid circle in red)
is depicted that attains the border $\gamma$ (in blue) at the point (the circle in teal) $$
(x,y)\approx (1.172, 21.8927)
$$
and then leaves~$R$.

\begin{figure}[h]
\centering
\includegraphics[width=0.5\linewidth]{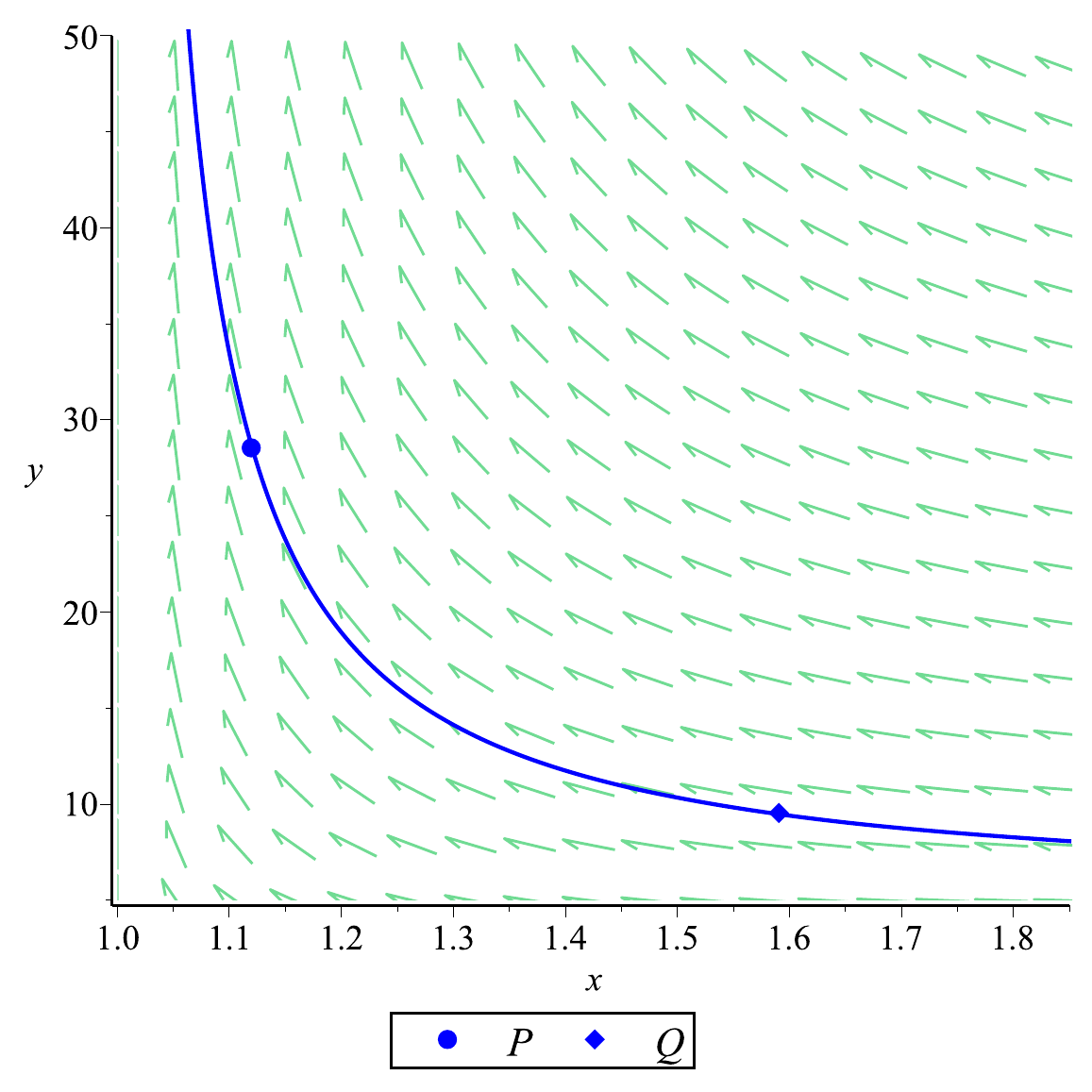}
\caption{Part I: The NRF~\eqref{ricciflow} on GWS~\textbf{1} with $k=l=m=25$
($a=25/146$)}
\label{improv75}
\end{figure}

\begin{figure}[h]
\centering
\includegraphics[width=0.5\linewidth]{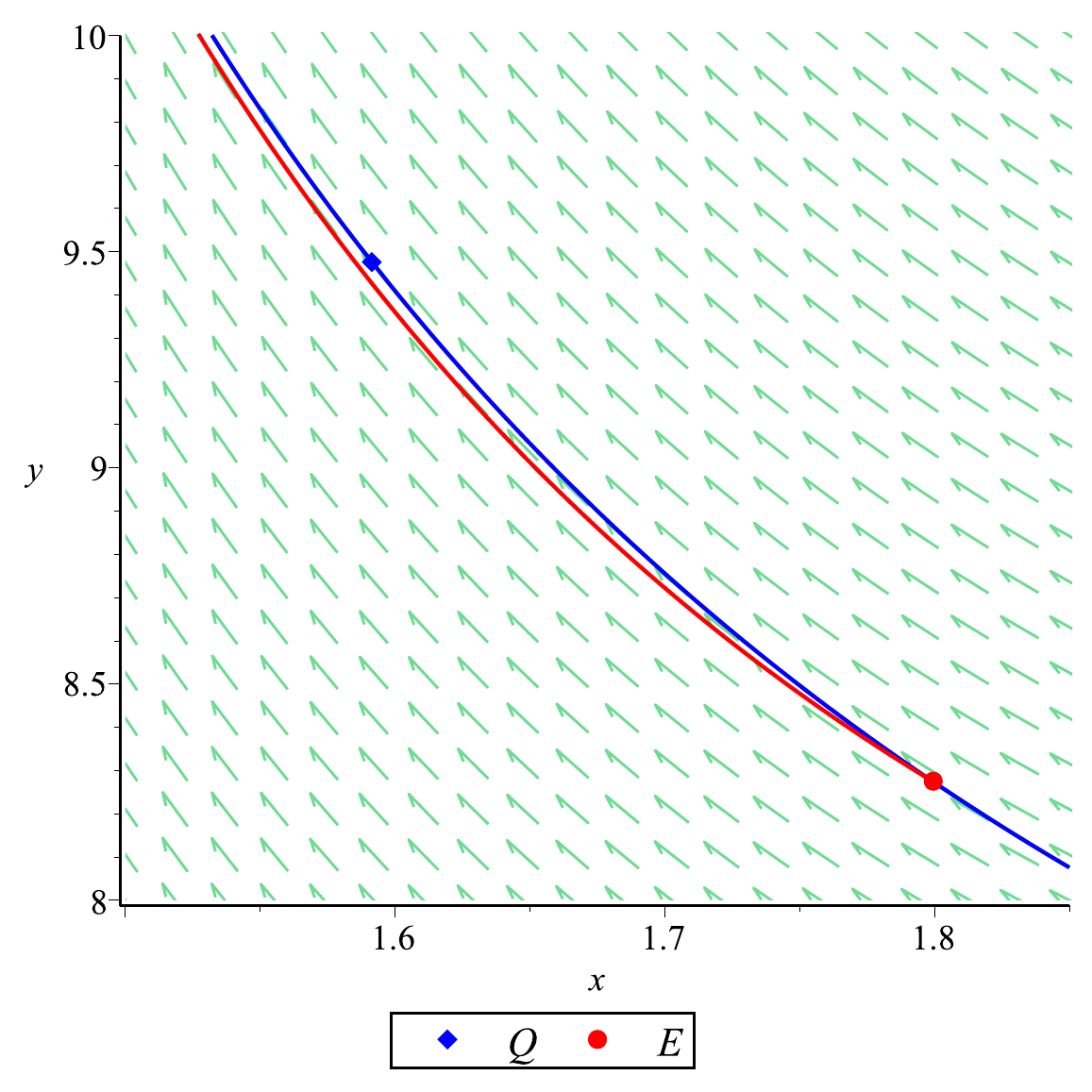}
\caption{Part II: An incoming trajectory originated at $E(1.8, 8.27)$}
\label{improv752}
\end{figure}

\begin{figure}[h!]
\centering
\includegraphics[width=0.5\linewidth]{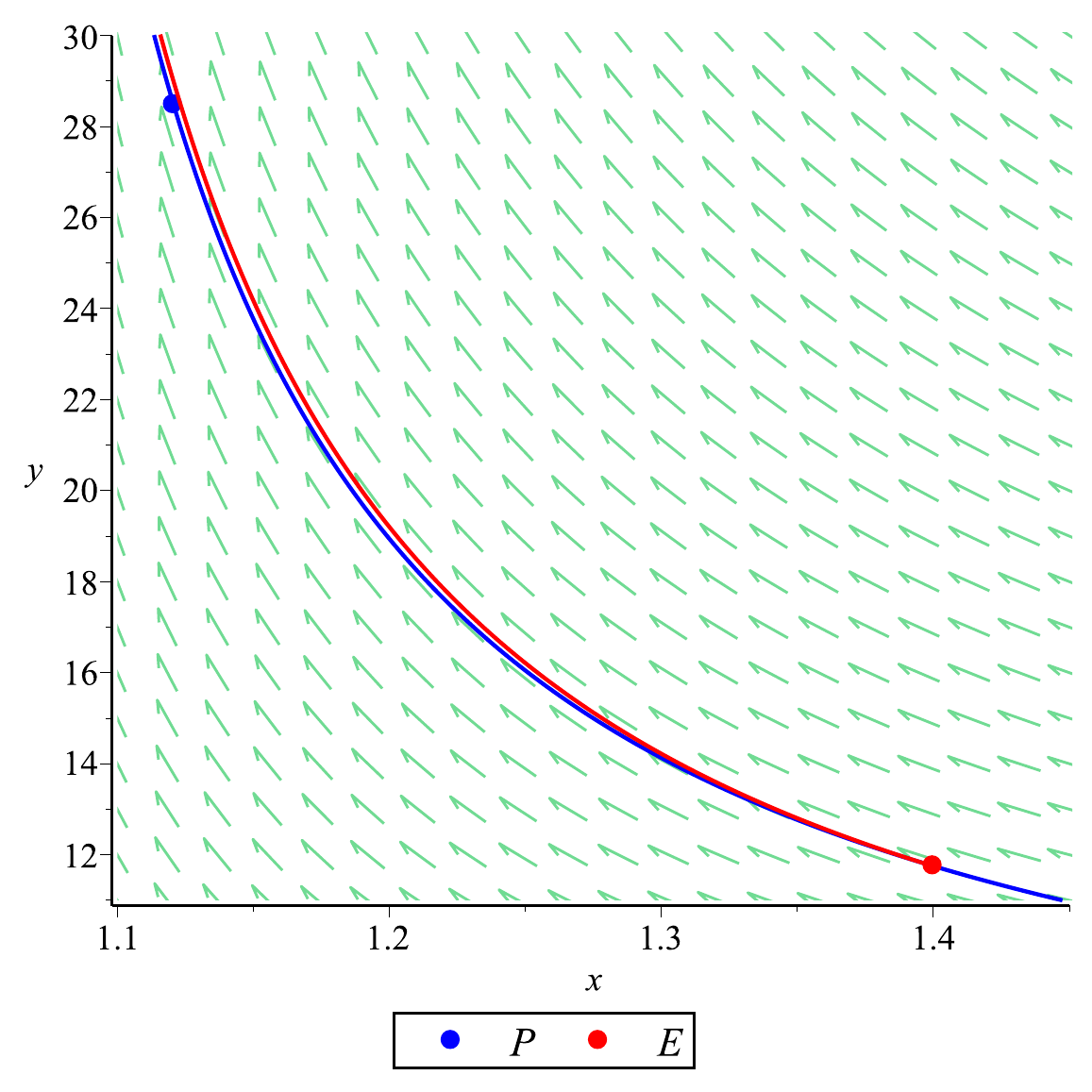}
\caption{Part III: An outgoing trajectory originated at $E(1.4, 11.75)$}
\label{improv76}
\end{figure}

\begin{figure}[h!]
\centering
\includegraphics[width=0.5\linewidth]{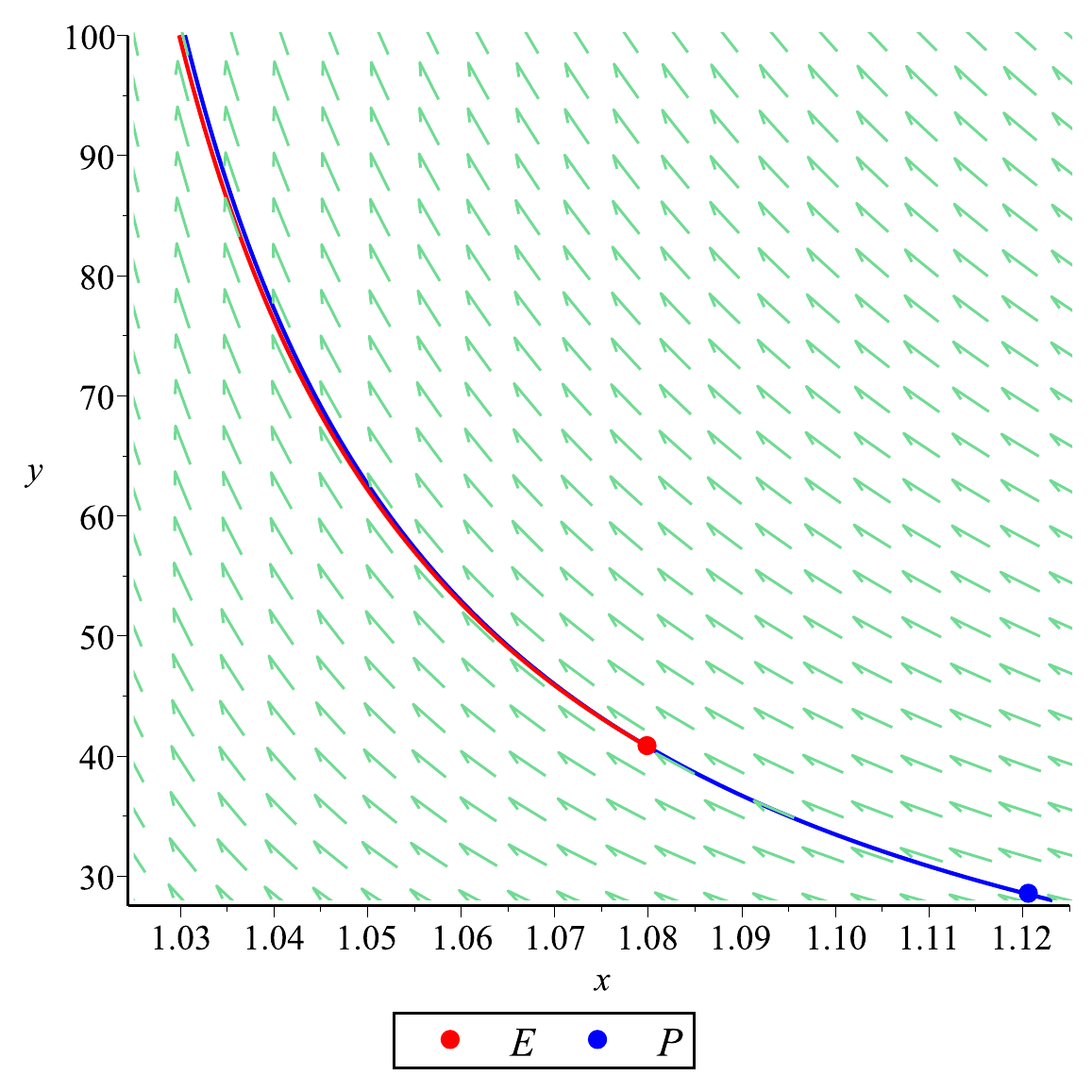}
\caption{Part IV: An incoming trajectory originated at $E(1.08, 40.76)$}
\label{improv762}
\end{figure}

\begin{example}\textit{The case $a\in \big(1/6, a^{\ast}\big)$}.
Not every  metric~\eqref{metric}  preserves  $\operatorname{Ric}>0$
on the spaces $\operatorname{SO}(3k)/\operatorname{SO}(k)\times \operatorname{SO}(k)
\times \operatorname{SO}(k)$ with $k\ge  17$.
\end{example}

As follows from Example~\ref{ex3} the spaces $\operatorname{SO}(3k)/\operatorname{SO}(k)\times \operatorname{SO}(k)
\times \operatorname{SO}(k)$  satisfy  $a\in (1/6, a^{\ast})$ for all $k\ge  17$.
Theorem~7 in~\cite{Ab24} implies also that there exist exactly two points, say $P$  and~$Q$,  on every border curve of the domain $R$ in the case $k\ge 17$ ($\theta<\theta_i$ for each $i$), such that trajectories are leaving $R$ across the arc $PQ$,
whereas every point $E\in (\gamma \setminus PQ)$ emits a trajectory towards~$R$.
Such a  scenario is demonstrated in Figures~\ref{improv75}~--- \ref{improv762} for the case $k=25$ with the corresponding
$$
a=\frac{k}{6k-4}=\frac{25}{146}\approx 0.1712<  a^{\ast}
$$
for various values of the initial point  $E(x_0,y_0)$ chosen in the Cauchy problem.

\section*{Conclusion}

\begin{enumerate}
\item
In~Theorem~\ref{thm1_KarSU_25} we extended some results obtained
in~Theorems~6, 7, 8 of~\cite{Ab24} proving the existence of infinitely many GWS
on which $\operatorname{Ric}>0$ is preserved  for every metric under the NRF~\eqref{ricciflow}.
The number of GWS on which $\operatorname{Ric}>0$ can  be  lost
for every metric is infinite as well.

\item
Theorem~\ref{thm2_KarSU_25} provides some refinement to the results
of Theorem~3 in~\cite{AN} and~Theorem~3 in~\cite{Ab7} devoted to
the special class of GWS with $a_1=a_2=a_3=a$, on which
the conclusion  was obtained using asymptotical methods  that   the NRF~\eqref{ricciflow} evolves any metric~\eqref{metric} to metrics~\eqref{metric}
with  $\operatorname{Ric}>0$   at least at infinity in the case $a\in (1/6, 1/2)$.
Now we know in  detail how this happens:

i) for $a\in [1/6, a^{\ast})$  trajectories of~\eqref{three_equat}
 can both enter and leave the domain~$\mathscr{R}_{+}$
according to the scenario ``enter~-- leave~-- enter''
and after then  never leave $\mathscr{R}_{+}$ again
($\operatorname{Ric}>0$ is preserved for some metrics at least).

ii) for $a\in [a^{\ast}, 1/2)$ trajectories of~\eqref{three_equat}  never leave~$\mathscr{R}_{+}$ originating in~$\mathscr{R}_{+}$
($\operatorname{Ric}>0$ is preserved for all metrics).

\item
Some metrics can lose $\operatorname{Ric}>0$ for $a$ chosen from the narrow interval $[1/6,a^{\ast})$.
But a rigorous analysis shows that those metrics, which lose $\operatorname{Ric}>0$ in the case $a\in(1/6,a^{\ast})$, all restore $\operatorname{Ric}>0$ in a finite time and such a time could be as long as we want. As for $a=1/6$, metrics losing $\operatorname{Ric}>0$ need not restore $\operatorname{Ric}>0$.
Recall that all metrics preserving $\operatorname{Ric}>0$ at $a=1/6$ were found (described) in \cite[Theorem 4]{AN}.

\item
 For further research the question may be of interest to find explicitly all solutions~$a_1,a_2, a_3$
of the system of the inequalities~$\theta\ge \theta_1$, $\theta\ge \theta_2$ and  $\theta\ge \theta_3$, equivalent to
\begin{equation*}
\begin{array}{l}
4(1+2a_1)(a_2+a_3)^2-(1-2a_1)\ge 0,   \\
4(1+2a_2)(a_3+a_1)^2-(1-2a_2)\ge 0,   \\
4(1+2a_3)(a_1+a_2)^2-(1-2a_3)\ge 0.
\end{array}
\end{equation*}
It is easy to observe  that some partial solutions of this system can be given by\,
$a_1=a_2= \epsilon\, (t+1)$,\, $a_3= \dfrac{1-16\epsilon^2}{2(1+16\epsilon^2)}\, (t+1)$\,
for all  $0<\epsilon < \widetilde{a}$ and   all
$0<t< \dfrac{32 \epsilon^2} {1-16\epsilon^2}$.

\item

The another interesting question is that can we obtain a similar detailed analysis in the general case of parameters $a_1,a_2, a_3\in (0,1/2)$. Can we extend the conclusion of Theorem~6 in~\cite{Ab24} to all metrics formulating the following conjecture:
\textit{Every} metric~\eqref{metric} possessing $\operatorname{Ric}>0$ loses $\operatorname{Ric}>0$ on a GWS with $a_1+a_2+a_3\le 1/2$
when evolved by the NRF~\eqref{ricciflow}?

\item
Do we need any additional conditions on the parameters $a_1,a_2, a_3$
in order to the conjecture to be confirmed?
Compare  with Theorem~7 in~\cite{Ab24} stating
``\textit{Every} metric~\eqref{metric} possessing $\operatorname{Ric}>0$ preserves $\operatorname{Ric}>0$ on a GWS with $a_1+a_2+a_3> 1/2$
when evolved by the NRF~\eqref{ricciflow}, if $\theta\ge \theta_i$ for each $i=1,2,3$''.

\end{enumerate}

\vspace{10mm}

\bibliographystyle{amsunsrt}

\begin{thebibliography}{[99]}




\bibitem{AANS1}Abiev, N. A.,   Arvanitoyeorgos, A.,  Nikonorov, Yu.G., \&  Siasos, P.
(2014). The dynamics of the Ricci flow on generalized Wallach spaces. \emph{Differential Geom. Appl., 35}, Supplement, 26--43. \url{https://doi.org/10.1016/j.difgeo.2014.02.002}


\bibitem{Nikonorov4}
Nikonorov, Yu.G.  (2016), (2021).
Classification of generalized Wallach spaces.
\emph{Geom. Dedicata, 181} (1), 193--212;
correction: \emph{Geom. Dedicata, 214} (1), 849--851.
\url{https://doi.org/10.1007/s10711-015-0119-z},
\url{https://doi.org/10.1007/s10711-021-00604-3}

\bibitem{Ni2} Ni, L. (2004).
Ricci flow and nonnegativity of sectional curvature.
\emph{Math. Res. Lett., 11} (5-6) 883--904.
\url{https://doi.org/10.4310/MRL.2004.v11.n6.a12}

\bibitem{Bo}
B\"ohm, C., \&  Wilking, B. (2007).
Nonnegatively curved manifolds with finite fundamental groups admit metrics with positive Ricci curvature.
\emph{Geom. Funct. Anal.,  17} (3), 665--681.
\url{https://doi.org/10.1007/s00039-007-0617-8}

\bibitem{ChWal}
Cheung, M.-W., \&  Wallach, N.R. (2015).
Ricci flow and curvature on the variety of flags on the two dimensional projective space over the complexes, quaternions and octonions.
\emph{Proc. Amer. Math. Soc.,  143} (1),  369--378.
\url{https://doi.org/10.1090/S0002-9939-2014-12241-6}

\bibitem{AN}
Abiev, N. A., \&  Nikonorov, Yu.G. (2016).
The evolution of positively curved invariant Riemannian metrics on the Wallach spaces under the Ricci flow.
\emph{Ann. Global Anal. Geom., 50} (1), 65--84.
\url{https://doi.org/10.1007/s10455-016-9502-8}

\bibitem{Ab7}
Abiev, N. A. (2017).
On the evolution of invariant Riemannian metrics on one class
of generalized Wallach spaces under the influence of the normalized Ricci flow.
\emph{Siberian Adv. Math., 27} (4), 227--238.
\url{https://doi.org/10.3103/S1055134417040010}

\bibitem{Ab_RM}	
Abiev, N. (2024). On the dynamics of a three-dimensional
differential system related to the  normalized Ricci flow on generalized Wallach spaces.
\emph{Results  Math., 79} (5), Article~198, 33pp.
\url{https://doi.org/10.1007/s00025-024-02229-w}

\bibitem{Gonzales} Gonz\'alez-\'Alvaro, D.,  Zarei, M. (2024).
Positive intermediate curvatures and Ricci flow.
\emph{Proc. Am. Math. Soc., 152}(6), 2637--2645.
\url{https://doi.org/10.1090/proc/16752}

\bibitem{Caven}
Cavenaghi, L.\,F., Grama, L., Martins, R.\,M. (2024).
On the dynamics of positively curved metrics on $SU(3)/T^2$ under the homogeneous Ricci flow. \emph{Mat. Contemp.,  60}, 3--30.
\url{http://doi.org/10.21711/231766362024/rmc602}

\bibitem{Ab24}	
Abiev, N.: The Ricci curvature and  the normalized Ricci flow on generalized Wallach spaces. Math. Phys. Anal. Geom. \textbf{28}(11), 1--35 (2025)
\url{https://doi.org/10.1007/s11040-025-09509-z}

\bibitem{Ab242}	
Abiev, N.\,A. (2025). The Ricci curvature and the normalized Ricci flow on the Stiefel manifolds $\operatorname{SO}(n)/\operatorname{SO}(n-2)$.
\emph{Electron. Res. Arch., 33} (3), 1858--1874.
\url{https://doi.org/10.3934/era.2025084}


\bibitem{Ab_Kar24}
Abiev, N. (2024). Structural properties of the sets of positively curved Riemannian metrics on generalized Wallach spaces.
\emph{Bull. Karaganda Univ.-Math.,116 }(4), 4--17.
\url{https://doi.org/10.31489/2024M4/4-17}


\end{thebibliography}

\end{document}